\newtheorem{theorem}{Theorem}[section]
\newtheorem{lemma}[theorem]{Lemma}
\newtheorem{prop}[theorem]{Proposition}
\theoremstyle{definition}
\newtheorem{example}[theorem]{Example}
\newtheorem{remark}[theorem]{Remark}
\numberwithin{equation}{subsection}
\theoremstyle{plain}
\newtheorem{conjecture}{Conjecture}
\newtheorem{problem}{Problem}
\newtheorem{corollary}[theorem]{Corollary}
\def\Z{\mathbb Z}
\def \N { {\rm N}}
\newcommand{\thmref}[1]{Theorem~\ref{#1}}
\newcommand{\lemref}[1]{Lemma~\ref{#1}}
\newcommand{\propref}[1]{Proposition~\ref{#1}}
\numberwithin{equation}{section}
\begin{document}
\title{Palindromic Automorphisms of  Free Nilpotent Groups}
\author[V. G. Bardakov]{Valeriy G. Bardakov}
\author[K. Gongopadhyay]{Krishnendu Gongopadhyay}
\author[M. V. Neshchadim]{Mikhail V. Neshchadim}
\author[M. Singh]{Mahender Singh}

\date{\today}
\address{Sobolev Institute of Mathematics and Novosibirsk State University, Novosibirsk 630090, Russia.}
\address{Laboratory of Quantum Topology, Chelyabinsk State University, Brat'ev Kashirinykh street 129, Chelyabinsk 454001, Russia.}
\email{bardakov@math.nsc.ru}
\address{Indian Institute of Science Education and Research (IISER) Mohali, Sector 81,  S. A. S. Nagar, P. O. Manauli, Punjab 140306, India.}
\email{krishnendu@iisermohali.ac.in}
\address{Sobolev Institute of Mathematics and Novosibirsk State University, Novosibirsk 630090, Russia.}
\email{neshch@math.nsc.ru}
\address{Indian Institute of Science Education and Research (IISER) Mohali, Sector 81,  S. A. S. Nagar, P. O. Manauli, Punjab 140306, India.}
\email{mahender@iisermohali.ac.in}

\subjclass[2010]{Primary 20F28; Secondary 20E36, 20E05}
\keywords{Free nilpotent group; palindromic automorphism; central automorphism, tame automorphism}

\begin{abstract}
In this paper, we initiate the study of palindromic automorphisms of groups that are free in some variety. More specifically, we define palindromic automorphisms of free nilpotent groups and show that the set of such automorphisms is a group. We find a generating set for the group of palindromic automorphisms of free nilpotent groups of step 2 and 3. In particular, we obtain a generating set for the group of  central palindromic automorphisms of these groups. In the end, we determine central palindromic automorphisms of free nilpotent groups of step 3 which satisfy the necessary condition of Bryant-Gupta-Levin-Mochizuki for a central automorphism to be tame.
\end{abstract}
\maketitle

\section{Introduction}
Let $\mathcal M$ be a variety of groups and $F$ be a group that is free in $\mathcal M$. Let  $X=\{x_1,\dots,x_n \}$ be a basis of $F$.  A reduced word $w \eqcirc x_{i_1} x_{i_2} \dots x_{i_m}$ in $X^{\pm1}$ is called a {\it palindrome} in the alphabet $X^{\pm1}$ if it is equal as a word to its reverse word  $\overline{w} \eqcirc x_{i_m} \dots x_{i_2} x_{i_1}$, where by $\eqcirc$ we denote equality of letter by letter.  An element $g\in F$ is called a {\it palindrome} if it can be presented by some word in the alphabet $X^{\pm1}$, which is a palindrome. Note that this definition depends on the generating set $X$ of $F$.

In \cite{C}, Collins defined and investigated palindromic automorphisms of absolutely free groups. Following Collins, we say that an automorphism $\phi$ of $F$ is a {\it palindromic automorphism} if $x_i^{\phi}$ is a palindrome with respect to $X$ for each $1 \leq  i \leq n$. It is not difficult to check that the product of two palindromic automorphisms is again a palindromic automorphism. We will denote the monoid of palindromic automorphisms of $F$ by $\Pi A(F)$.

An automorphism $\phi$ of $F$ of the form
$$\phi: x_i \mapsto \overline{p_i} x_i  p_i~ \textrm{for each}~1 \leq  i \leq n,$$
is called an \emph{elementary palindromic automorphism}. Here $p_1, \dots, p_n \in F$. The sub-monoid of elementary palindromic automorphisms of $F$ is denoted by $E \Pi A(F)$.
We call the group
$$ \Omega S_n = \langle t_i, \alpha_{j,j+1} ~|~\ 1 \leq i \leq n~\textrm{and}~ 1 \leq j \leq n-1 \rangle $$
the \emph{extended symmetric group}, where
$$
t_{i} : \left\{
\begin{array}{ll}
x_i \longmapsto x_i^{-1} &  \\
x_k \longmapsto x_k &  \textrm{for}~k \neq i
\end{array} \right.
$$
and
$$
\alpha_{j, j+1} : \left\{
\begin{array}{ll}
x_j \longmapsto x_{j+1} &  \\
x_{j+1} \longmapsto x_j & \\
x_k \longmapsto x_k & \textrm{for}~ k \neq j.
\end{array} \right.
$$

Clearly, $E\Pi A(F)$ and $ \Omega S_n$ generate $\Pi A (F)$ as a monoid and $\Pi A(F) = E \Pi A(F) \leftthreetimes \Omega S_n$.  Let $IA(F)$ denote the group of those automorphisms of $F$ that induce identity on the abelianization of $F$. Then we have the following short exact sequence
\begin{equation} \label{se1} 1 \to IA (F) \to Aut(F) \to {\rm GL}(n,  \mathbb{Z}) \to 1.\end{equation}
Let $PI(F)=E \Pi A(F) \cap IA(F)$ denote the sub-monoid of \emph{palindromic IA-auto\-mor\-phisms} of $F$.
\medskip

If $F = F_n$ is a free group, then Collins \cite{C} obtained a generating set for $\Pi A (F_n)$. In particular, Collins proved that $E \Pi A(F_n)$ is a group generated by $\mu_{ij}$ for $1 \leq i \not= j \leq n$, where
$$
\mu_{ij} : \left\{
\begin{array}{ll}
x_i \longmapsto x_j x_{i} x_j &  \\
x_k \longmapsto x_k & \textrm{for}~ k \neq i.
\end{array} \right.
$$

In the same paper \cite{C}, Collins conjectured that $E\Pi A(F_n)$ is torsion free for each $n \geq 2$. Using geometric techniques, Glover and Jensen  \cite{gj} proved this conjecture and also calculated the virtual cohomological dimension of $\Pi A(F_n)$. Extending this work in \cite{jmm}, Jensen, McCommand and Meier computed the Euler characteristic of $\Pi A (F_n)$ and $E \Pi A (F_n)$. In \cite{pr}, Piggott and Ruane constructed Markov languages of normal forms for $\Pi A(F_n)$ using methods from logic theory. In \cite{ne1, ne2}, Nekritsukhin investigated some basic group theoretic questions about $\Pi A(F_n)$. In particular, he studied involutions and center of $\Pi A (F_n)$. In a recent  paper \cite{Fullarton}, Fullarton obtained a generating set for the palindromic IA-automorphism  group $PI(F_n)$. This was obtained by constructing an action of $PI(F_n)$ on a  simplicial complex modelled on the complex of partial bases due to Day and Putman \cite{DP}. The papers \cite{gj} and \cite{Fullarton} indicates a deep connection between palindromic automorphisms of free groups and geometry. Recently, Bardakov, Gongopadhyay and Singh \cite{BGS} investigated many algebraic properties of $\Pi A (F_n)$. In particular, they obtained conjugacy classes of involutions in $\Pi A(F_2)$ and investigated residual nilpotency of $\Pi A(F_n)$. They also refined a result of Fullarton \cite{Fullarton} by proving that $PI(F_n)=I A(F_n) \cap E \Pi A'(F_n)$.

The purpose of this paper is to initiate the study of palindromic automorphisms of free groups in varieties of groups. Let $F$ be a free group in some variety. Then $\Pi A(F) = E \Pi A(F) \leftthreetimes \Omega S_n$ and the following problem seems natural.

\begin{problem}\label{problem1}
When is $E \Pi A(F)$ a group? Find a generating set for $E \Pi A(F)$ as a monoid and as a group.
\end{problem}

Let $F^n=F \times \cdots \times F$ ($n$ copies). Then the following problem is connected with  the description of elementary palindromic automorphisms of $F$.

\begin{problem}\label{problem2}
Let $q = (q_1, \dots, q_n) \in F^n$. When does the palindromic map $$\varphi_q : x_i \mapsto \overline{q_i} x_i  q_i~~ \textrm{for}~1 \leq i \leq  n,$$ defines an automorphism of $F$?
\end{problem}

Regarding the monoid of palindromic IA-automorphisms, we pose the following problem.

\begin{problem}\label{problem3}
When is $PI(F)\neq 1$? If  $PI(F) \neq 1$, then find a generating set for $PI(F)$.
\end{problem}

Let $\N_{n,k} = F_n / \gamma_{k+1} F_n$ be the free nilpotent group of rank $n$ and step $k$. In this paper, we investigate the above problems for $\N_{n,k} $ with more precise results for $k=2$ and $3$. The paper is organised as follows.

In  Section 2, we discuss Problem \ref{problem1} and \ref{problem2}. In Theorem \ref{group}, we show that $\Pi A(\N_{n,k})$ is a group. We also obtain some general results regarding central palindromic automorphisms of $\N_{n,k}$ in Theorem \ref{central-palin}.

In  Sections 3 and 4, we discuss Problems \ref{problem1} and \ref{problem3}. We prove that $E \Pi A(\N_{n,1}) \cong E \Pi A(\N_{n,2})$ in Proposition \ref{Nn1=Nn2}. We find a generating set for $\Pi A(\N_{n,2})$ in Proposition \ref{Nn2-gen-set} and prove that $PI(\N_{n,2})=1$ in Proposition \ref{PINn2=1}. Finally, we find a generating set for $E \Pi A(\N_{n,3})$ in Theorem \ref{Nn3-gen-set}.

Note that, the natural homomorphism $F_n \to \N_{n,k}$ induces a homomorphism $$Aut(F_n) \to Aut(\N_{n,k}).$$ It is well known that this homomorphism is not an epimorphism for $k \geq 3$ (see \cite{Andreadakis}). An automorphism of $\N_{n,k}$ is called {\it tame} if it is induced by some automorphism of $F_n$.

\begin{problem}\label{problem5}
Describe tame palindromic automorphisms of $\N_{n,k}$.
\end{problem}

We consider Problem \ref{problem5} in Section 5. In Theorem \ref{central-bgml-condition}, we find a generating set for the group of central palindromic automorphisms of $\N_{n,3}$ which satisfy the necessary condition of Bryant-Gupta-Levin-Mochizuki for such an automorphism to be tame. We also show that some of these automorphisms are tame. Finally, we conclude the paper with some open problems in Section 6.

We use standard notation and convention throughout the paper. All functions are evaluated from left to right. If $G$ is a group, then $G'$ denotes the commutator subgroup of $G$, $Z(G)$ denotes the center of $G$, and $\gamma_n G$ denotes the $n$th term in the lower central series of $G$. Given two elements $g$ and $h$ in $G$, we denote the element $h^{-1}gh$ by $g^h$ and the commutator $g^{-1}h^{-1}gh$ by $[g,h]$. Given $g_1, g_2, \dots, g_k \in G$, we denote the element
$[\cdots [[g_1, g_2], g_3], \dots, g_k]$ by $[g_1,g_2, \dots, g_k]$.

\section{General results on palindromic automorphisms of $\N_{n,k}$}\label{gr}

In this section, we consider free nilpotent groups $\N_{n,k}$ of rank $n$ and step $k$. Let $\{x_1, x_2,\dots, x_n \}$ be a basis for $\N_{n,k}$. Each $n$-tuple $(p_1, \ldots, p_n) \in \N_{n, k}^n$ defines an endomorphism of $N$, which acts on the generators by the rule
$$
x_i^{\psi} = p_i~~\textrm{for each}~ 1 \leq i \leq n.
$$

It is well known that the endomorphism $\psi$ is an automorphism if the matrix $[\psi] = \big(\log_{x_j}(p_i)\big)$ lies in ${\rm GL}(n, \Z)$. The main result of this section is the following theorem.

\begin{theorem}\label{group}
$\Pi A(\N_{n,k})$ is a group.
\end{theorem}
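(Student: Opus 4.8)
The plan is to show that $\Pi A(\N_{n,k})$, which we already know is a monoid, is closed under taking inverses. Since $\Pi A(\N_{n,k}) = E\Pi A(\N_{n,k}) \leftthreetimes \Omega S_n$ and $\Omega S_n$ is visibly a finite group (it maps onto the signed permutation group), it suffices to prove that $E\Pi A(\N_{n,k})$ is a group, i.e. that every elementary palindromic automorphism $\phi : x_i \mapsto \overline{p_i}\, x_i\, p_i$ has an inverse that is again elementary palindromic. I would isolate this as the crux: once $E\Pi A(\N_{n,k})$ is a group, the semidirect product decomposition immediately gives that $\Pi A(\N_{n,k})$ is a group.

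**First I would** exploit the nilpotency of $\N = \N_{n,k}$ via a descending induction on the lower central series, using the central series $\N = \gamma_1 \N \supseteq \gamma_2 \N \supseteq \cdots \supseteq \gamma_{k+1}\N = 1$. Given $\phi \in E\Pi A(\N)$, one checks first that $\phi$ is genuinely an automorphism: modulo $\gamma_2\N$ it acts as the identity on the abelianization $\Z^n$ (since $\overline{p_i}\,x_i\,p_i \equiv x_i$ there), so $[\phi] = I \in \mathrm{GL}(n,\Z)$ and $\phi$ is an IA-automorphism, hence an automorphism. The inverse $\phi^{-1}$ is therefore also an IA-automorphism. The real content is to show $\phi^{-1}$ is again \emph{palindromic of elementary type}, and here I would argue that $x_i^{\phi^{-1}} = \overline{q_i}\, x_i\, q_i$ for suitable $q_i \in \N$, constructed layer by layer: having corrected $\phi^{-1}$ to agree with the desired form modulo $\gamma_{j}\N$, one adjusts $q_i$ by an element of $\gamma_{j-1}\N$ to push the discrepancy down one more level, using that conjugation $x_i \mapsto \overline{c}\,x_i\,c$ for $c \in \gamma_{j-1}\N$ changes $x_i^{\phi^{-1}}$ only inside $\gamma_j\N$ and in a way that is surjective onto the relevant quotient. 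Because $\gamma_{k+1}\N = 1$, this terminates after finitely many steps.

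**The key steps in order** are: (1) observe $E\Pi A(\N) \cap IA(\N)$-type membership so that each $\phi \in E\Pi A(\N)$ is invertible in $Aut(\N)$; (2) set up the descending filtration argument and show that the "elementary palindromic correction" at level $j$ can always be carried out, i.e. the map $\gamma_{j-1}\N \to \gamma_j\N/\gamma_{j+1}\N$ induced by $c \mapsto [\,x_i, \text{stuff}\,]$ hits every needed coset; (3) conclude by induction on $k$ (or on the filtration length) that $\phi^{-1} \in E\Pi A(\N)$; (4) combine with finiteness of $\Omega S_n$ and the decomposition $\Pi A(\N) = E\Pi A(\N) \leftthreetimes \Omega S_n$ to deduce that $\Pi A(\N)$ is a group.

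**The hard part will be** step (2): verifying that the palindromic form $\overline{q_i}\,x_i\,q_i$ is flexible enough to absorb the error terms appearing at each level of the lower central series. Concretely, one must check that replacing $q_i$ by $q_i c$ (or $c q_i$) with $c$ central-modulo-$\gamma_{j+1}$ produces exactly the correction needed, with no parity or symmetry obstruction coming from the palindromic constraint; the commutator identities in a free nilpotent group (Hall--Witt, and linearity of $[\,\cdot\,,\,\cdot\,]$ modulo higher terms) should make this go through, but it requires care to track the contributions of $\overline{q_i}$ and $q_i$ simultaneously. An alternative, possibly cleaner route I would keep in mind is to show directly that $E\Pi A(\N)$ is the image under the natural surjection $Aut(F_n) \to$ (automorphisms of $\N$ of a suitable tame type) of the group $E\Pi A(F_n)$, which is known to be a group by Collins's result; one would then only need that the image of $E\Pi A(F_n)$ in $Aut(\N)$ equals $E\Pi A(\N)$ (the nontrivial inclusion being that every elementary palindromic automorphism of $\N$ lifts to one of $F_n$, which again reduces to a statement about realizing the $p_i \in \N$ by words in $F_n$).
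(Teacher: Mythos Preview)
Your overall strategy---reduce to $E\Pi A(\N_{n,k})$ and then correct $\phi$ layer by layer along the lower central series---is exactly the paper's approach. However, your step~(1) contains a genuine error that breaks the argument as written. You claim that $\overline{p_i}\,x_i\,p_i \equiv x_i \pmod{\gamma_2\N}$, so that every $\phi\in E\Pi A(\N)$ is an IA-automorphism. This is false: modulo $\gamma_2\N$ one has $\overline{p_i}\,x_i\,p_i \equiv p_i^2 x_i$, so if $p_i \equiv x_1^{\alpha_{i1}}\cdots x_n^{\alpha_{in}}$ then $[\phi] = I + 2(\alpha_{ij})$, which is typically not the identity. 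For instance $\mu_{12}\colon x_1\mapsto x_2x_1x_2$ induces $\left(\begin{smallmatrix}1&2\\0&1\end{smallmatrix}\right)$ on the abelianization. Consequently the base step of your filtration argument cannot be ``trivial because $\phi$ is IA''; you must actually produce a palindromic correction at the abelian level. The paper does this by observing (Proposition~\ref{p1}) that $[\phi]\equiv I\pmod 2$, hence $[\phi]^{-1}=I+2(\beta_{ij})$ for some integers $\beta_{ij}$, and then taking $\psi_1\colon x_i\mapsto x_n^{\beta_{in}}\cdots x_1^{\beta_{i1}}\,x_i\,x_1^{\beta_{i1}}\cdots x_n^{\beta_{in}}$, which is visibly elementary palindromic and satisfies $[\phi\psi_1]=I$. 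After this first step the composite $\phi\psi_1$ \emph{is} IA, and your layer-by-layer argument (which the paper carries out explicitly, defining $\psi_\ell$ by $x_i\mapsto \overline{q_{i_\ell}^{-1}}\,x_i\,q_{i_\ell}^{-1}$ and checking $q_{i_{\ell+1}}\in\gamma_{\ell+1}\N$) goes through without obstruction.

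Your proposed alternative route---realize $E\Pi A(\N_{n,k})$ as the image of Collins's group $E\Pi A(F_n)$---does not work. The paper shows in Section~5 that $E\Pi A(\N_{n,3})$ contains wild automorphisms (e.g.\ $x_1\mapsto \overline{[x_2,x_1]}\,x_1\,[x_2,x_1]$, $x_2\mapsto x_2$), so the natural map $E\Pi A(F_n)\to E\Pi A(\N_{n,k})$ is not surjective for $k\ge 3$, and no such lifting argument is available.
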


First, we discuss Problem \ref{problem2} for free nilpotent groups.

\begin{prop}\label{p1}
Let $\N_{n, k}$ be the free nilpotent group of rank $n$ and step $k$. Further, let $\varphi$ be the automorphism determined by  $(p_1, p_2, \dots, p_n) \in \N_{n, k}^n$. If $\varphi$ is an elementary palindromic automorphism,  then the matrix $[\varphi]$ is the identity matrix mod $2$.

For $\N_{n, 2}$, the converse is also true. In other words, if the matrix $[\varphi]$ is the identity matrix mod $2$, then automorphism $\varphi$ is an elementary palindromic automorphism.
\end{prop}

\begin{proof}
Suppose that  $\varphi$ is an elementary palindromic automorphism. Then $\varphi = \varphi_q$ for some $q = (q_1, q_2, \dots,q_n) \in \N_{n, k}^n$, where $q_i=x_1^{\alpha_{i_1} }x_2^{\alpha_{i_2}} \ldots x_n^{\alpha_{i_ n}} c_i$ and  $c_i \in \N_{n,k}'$ for each $ 1 \leq i \leq n$.
Then

\begin{eqnarray*}
x_i^{\varphi} &= & \overline{q_i} x_i {q_i} \\
&=&\overline{c_i} x_n^{\alpha_{i_n}} \ldots x_1^{\alpha_{i_1}} x_i  x_1^{\alpha_{i_1}} \ldots x_n^{\alpha_{i_n}} c_i\\
&=& x_1^{2 \alpha_{i_1}} x_2^{2 \alpha_{i_2}} \ldots x_i^{2 \alpha_{i_ i}+1} \ldots  x_n^{2\alpha_{i_n}}   d_i, \hbox{where } d_i \in \N_{n, k}'.
\end{eqnarray*}
Hence $[\varphi] = 2[q] + I$, where $[q]=(\alpha_{i_j})_{i, j=1, \dots, n}$.
This proves the first part of the proposition.

If $\varphi \in Aut (\N_{n, 2})$ and  $[\varphi] \equiv I (\mathrm{mod} ~2)$, then  $$\varphi(x_i)=x_1^{2 \alpha_{i_1}}x_2^{2 \alpha_{i_2}} \ldots x_i^{2 \alpha_{i_i}+1}\ldots  x_n^{2\alpha_{i_n}}   d_i,$$ where $d_i \in \N_{n, 2}'$. Now using normal forms for palindromes in $\N_{n,2}$ (as in  \cite[p. 557]{BG2}), we can reduce the right hand side to the form $\overline{c_i} x_n^{\alpha_{i_n}} \ldots x_1^{\alpha_{i_1}} x_i  x_1^{\alpha_{i_1}} \ldots x_n^{\alpha_{i_n}} c_i$ for some $c_i \in \N_{n, 2}'$. This proves the proposition.
\end{proof}

\begin{corollary}
If the palindromic map $\varphi_q$ defines a palindromic automorphism, then it is an $IA$-automorphism of $\N_{n, k}$ if and only if $[q]=0$.
\end{corollary}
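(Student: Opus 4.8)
The plan is to read this off directly from \propref{p1} together with the short exact sequence \eqref{se1}. First I would observe that $\varphi_q$, being of the form $x_i \mapsto \overline{q_i}\, x_i\, q_i$, is by definition an elementary palindromic map, so the moment it defines an automorphism it is an elementary palindromic automorphism; hence the computation carried out in the proof of \propref{p1} applies verbatim and gives $[\varphi_q] = 2[q] + I$, where $[q] = (\alpha_{i_j})_{i,j=1,\dots,n}$ records the abelianized exponents of the $q_i$.

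Next I would recall that, by \eqref{se1}, an automorphism of $\N_{n,k}$ lies in $IA(\N_{n,k})$ exactly when its image in ${\rm GL}(n,\Z)$ is the identity matrix, that is, when $[\varphi_q] = I$. Substituting the formula above, $\varphi_q$ is an $IA$-automorphism if and only if $2[q] + I = I$, i.e. $2[q] = 0$. Since $[q]$ has integer entries and $\Z$ is torsion-free, $2[q] = 0$ forces $[q] = 0$, and conversely $[q] = 0$ trivially gives $[\varphi_q] = I$. This proves both implications.

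There is essentially no obstacle here; the statement is a formal consequence of \propref{p1}. The only points worth spelling out are that $\varphi_q$ is elementary palindromic (so that $[\varphi_q] = 2[q]+I$ is legitimate) and that the identification of $IA$-automorphisms with those of trivial ${\rm GL}(n,\Z)$-image is exactly \eqref{se1}. If desired, one can also give the backward direction concretely: when $[q] = 0$ each $q_i$ lies in $\N_{n,k}'$, so $x_i^{\varphi_q} = \overline{q_i}\, x_i\, q_i \equiv x_i$ modulo $\N_{n,k}'$, i.e. $\varphi_q$ induces the identity on the abelianization.
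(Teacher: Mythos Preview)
Your proof is correct and follows essentially the same route as the paper: both deduce the result directly from \propref{p1}, using the identity $[\varphi_q] = 2[q] + I$ together with the fact that an automorphism is $IA$ precisely when its induced matrix is the identity. Your write-up is in fact more explicit than the paper's terse one-line justification.
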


\begin{proof}
Note that the elements $y_1, \dots, y_n$ generate $\N_{n, k}$ if and only if they generate $\N_{n, k}$ modulo the commutator $\N_{n, k}'$. Hence the result follows from the above proposition.
\end{proof}
\medskip

\noindent \textbf{Proof of Theorem \ref{group}}.
Since $\Pi A(\N_{n,k}) = E \Pi A(\N_{n,k}) \leftthreetimes \Omega S_n$, it suffices to prove that $E \Pi A(\N_{n,k})$ is a group. Clearly, $E \Pi A(\N_{n,k})$ is a monoid. It only remains to prove that if $\phi \in E \Pi A(\N_{n,k})$, then $\phi^{-1} \in E \Pi A(\N_{n,k})$. Let $\phi \in E \Pi A(\N_{n,k})$. Then $$x_i^{\phi}= \overline{q_i} x_i  q_i~ \textrm{for}~q_i \in \N_{n,k}~\textrm{for each}~1 \leq i \leq  n.$$
Define $\phi_1= \phi$ and $q_{i_1}=q_i$ for each $1 \leq i \leq  n$. Then we have 
$$q_{i_1}= x_1^{\alpha_{i_1}} \cdots x_n^{\alpha_{i_n}} (\mathrm{mod} ~\gamma_2 \N_{n,k})$$
for some $\alpha_{i_j} \in \mathbb{Z}$. As discussed in Proposition \ref{p1}, we have $[\varphi] = 2(\alpha_{i_j}) + I \in {\rm GL}(n, \mathbb{Z})$ and hence its inverse $[\varphi]^{-1} = 2(\beta_{i_j}) + I$ for some $\beta{i_j} \in \mathbb{Z}$.

Define $\psi_1 \in E \Pi A(\N_{n,k})$ by the rule
$$x_i^{\psi_1}=  x_n^{\beta_{i_n}} \cdots x_1^{\beta_{i_1}} x_i x_1^{\beta_{i_1}} \cdots x_n^{\beta_{i_n}}~\textrm{for each}~1 \leq i \leq  n.$$
Let $\phi_2= \phi_1 \psi_1$. Then we have
$$x_i^{\phi_2}=  \overline{q_{i_2}} x_i q_{i_2} ~\textrm{for each}~1 \leq i \leq  n$$
and for some $q_{i_2} \in \gamma_2 \N_{n,k}$.

Define $\psi_2 \in E \Pi A(\N_{n,k})$ by the rule
$$x_i^{\psi_2}=  \overline{q_{i_2}^{-1}} x_i q_{i_2}^{-1} ~\textrm{for each}~1 \leq i \leq  n.$$
It follows that
$$x_i^{\phi_2 \psi_2}=( \overline{q}_{i_2} x_i q_{i_2})^{\psi_2}= \overline{q_{i_2}^{\psi_2}} ~ \overline{q_{i_2}^{-1}} x_i q_{i_2}^{-1}  q_{i_2}^{\psi_2} ~\textrm{for each}~1 \leq i \leq  n.$$
Since $\psi_2$ is trivial modulo $\gamma_2 \N_{n,k}$, we have 
$$q_{i_2}^{\psi_2}=q_{i_2} (\mathrm{mod} ~\gamma_3 \N_{n,k}).$$
Hence $q_{i_3}:=q_{i_2}^{-1}q_{i_2}^{\psi_2} \in  \gamma_3 \N_{n,k}$. Let $\phi_3= \phi_2 \psi_2$. Then we have
$$x_i^{\phi_3}=  \overline{q_{i_3}} x_i q_{i_3} ~\textrm{for each}~1 \leq i \leq  n.$$
Define $\psi_3 \in E \Pi A(\N_{n,k})$ by the rule
$$x_i^{\psi_3}=  \overline{q_{i_3}^{-1}} x_i q_{i_3}^{-1} ~\textrm{for each}~1 \leq i \leq  n.$$
Then we have
$$x_i^{\phi_3 \psi_3}=( \overline{q}_{i_3} x_i q_{i_3})^{\psi_3}= \overline{q_{i_3}^{\psi_3}} ~ \overline{q_{i_3}^{-1}} x_i q_{i_3}^{-1}  q_{i_3}^{\psi_3} ~\textrm{for each}~1 \leq i \leq  n.$$
Since $\psi_3$ is trivial modulo $\gamma_2 \N_{n,k}$, we have 
$$q_{i_3}^{\psi_3}=q_{i_3} (\mathrm{mod} ~\gamma_4 \N_{n,k}).$$
Hence $q_{i_4}:=q_{i_3}^{-1}q_{i_3}^{\psi_3} \in  \gamma_4 \N_{n,k}$. Let $\phi_4= \phi_3 \psi_3$. Then we have
$$x_i^{\phi_4}=  \overline{q_{i_4}} x_i q_{i_4} ~\textrm{for each}~1 \leq i \leq  n.$$
Continuing like this, at the $(k+1)$st step, we obtain
$$x_i^{\phi_{k+1}}=  \overline{q_{i_{k+1}}} x_i q_{i_{k+1}} ~\textrm{for each}~1 \leq i \leq  n,$$
where $q_{i_{k+1}} \in \gamma_{k+1} \N_{n,k}=1$. This implies $\phi_{k+1}=1$ and hence $\psi_{k+1}=1$. Thus we obtain two sequences of automorphisms $\psi_1, \psi_2, \dots, \psi_k$ and $\phi_1, \phi_2, \dots, \phi_k$ in $E \Pi A(\N_{n,k})$ such that $\phi_{l+1}=\phi_l \psi_l$ for each $l \geq 1$. This gives 
$$1= \phi_{k+1}=\phi_k \psi_k=\phi_{k-1} \psi_{k-1}\psi_k=\cdots = \phi_1 \psi_1 \psi_2 \cdots \psi_k.$$
Hence $\phi^{-1}=\phi_1^{-1}= \psi_1~ \psi_2\cdots \psi_k \in E \Pi A(\N_{n,k})$. This proves the theorem. $\Box$
\medskip

We define the set of linear palindromic automorphisms by
$$L \Pi(\N_{n,k})=\bigg \{ \phi \in E \Pi A(\N_{n,k}) \ | \ \phi:  \left.
\begin{array}{l}
x_i \longmapsto x_n^{\alpha_n}\ldots x_1^{\alpha_1} x_i x_1^{\alpha_1} \ldots x_n^{\alpha_n} \\
x_j \longmapsto x_j  ~\mbox{for}~j \not= i
\end{array} \right. \bigg\}.$$

As a consequence of the above theorem, we obtain the following corollary.

\begin{corollary}
Every elementary palindromic automorphism $\phi$ of $\N_{n,k}$ can be written as $\phi=\varphi \psi$ for some $\varphi \in L \Pi (N_{n,k}) $ and $\psi \in  PI(\N_{n,k})$.
\end{corollary}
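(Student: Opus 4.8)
The plan is to reduce the statement to a fact about the level-$2$ principal congruence subgroup $\Gamma=\{A\in{\rm GL}(n,\Z)\ :\ A\equiv I\ (\mathrm{mod}\ 2)\}$ of ${\rm GL}(n,\Z)$. Write $\pi\colon E\Pi A(\N_{n,k})\to{\rm GL}(n,\Z)$ for the restriction to $E\Pi A(\N_{n,k})$ of the homomorphism in \eqref{se1}; then $\ker\pi=E\Pi A(\N_{n,k})\cap IA(\N_{n,k})=PI(\N_{n,k})$, and by \propref{p1} the image $\pi\big(E\Pi A(\N_{n,k})\big)$ is contained in $\Gamma$. Let $\langle L\Pi(\N_{n,k})\rangle$ denote the subgroup generated by the linear palindromic automorphisms. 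I claim that $\pi\big(\langle L\Pi(\N_{n,k})\rangle\big)=\Gamma$; granting this, given $\phi\in E\Pi A(\N_{n,k})$ we have $\pi(\phi)\in\Gamma$, so there is $\varphi\in\langle L\Pi(\N_{n,k})\rangle$ with $\pi(\varphi)=\pi(\phi)$. Then $\psi:=\varphi^{-1}\phi$ lies in $E\Pi A(\N_{n,k})$ — a group by \thmref{group} — and in $\ker\pi=PI(\N_{n,k})$, so $\phi=\varphi\psi$ is the asserted factorisation.

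To verify the claim I would first describe $\pi$ on the generators. A linear palindromic automorphism acting on $x_i$ with parameters $(\alpha_1,\dots,\alpha_n)$ has matrix equal to the identity with $i$-th row replaced by $(2\alpha_1,\dots,2\alpha_i+1,\dots,2\alpha_n)$; invertibility over $\Z$ forces $2\alpha_i+1=\pm1$, i.e. $\alpha_i\in\{0,-1\}$. When $\alpha_i=0$ this matrix is $\prod_{j\ne i}(I+2E_{ij})^{\alpha_j}$, and when $\alpha_i=-1$ it is that product times the sign matrix $[t_i]=\mathrm{diag}(1,\dots,-1,\dots,1)$. Since $L\Pi(\N_{n,k})$ contains the maps $\mu_{ij}\colon x_i\mapsto x_jx_ix_j$ (matrix $I+2E_{ij}$) and $t_i\colon x_i\mapsto x_i^{-1}$ (matrix $[t_i]$), it follows that $\pi\big(\langle L\Pi(\N_{n,k})\rangle\big)$ equals the subgroup $\langle\, I+2E_{ij},\ [t_i]\ :\ i\ne j\,\rangle$ of ${\rm GL}(n,\Z)$. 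Hence the claim reduces to the purely arithmetic assertion that this subgroup is all of $\Gamma$.

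This last assertion is where the real work lies, and it is the main obstacle. For $n=1$ it is trivial. For $n=2$ it is the classical description of the level-$2$ congruence subgroup of ${\rm SL}(2,\Z)$, which is generated by $I+2E_{12}$, $I+2E_{21}$ and $-I=[t_1][t_2]$ (indeed it is isomorphic to $\Z/2\times F_2$, with the two unipotents generating the free factor); since a determinant $-1$ matrix in $\Gamma$ is $[t_1]$ times a determinant $1$ one, adjoining the $[t_i]$ gives all of $\Gamma$. For $n\ge3$ the cleanest route is to invoke the classical solution of the congruence subgroup problem: by the work of Bass--Milnor--Serre, Mennicke and Tits, for $n\ge3$ every congruence subgroup of ${\rm SL}(n,\Z)$ is generated by the elementary matrices lying in it, so in particular ${\rm SL}(n,\Z)\cap\Gamma=\langle I+2E_{ij}:i\ne j\rangle$; adjoining the $[t_i]$ again yields $\Gamma$. (A self-contained induction on $n$ is also possible, reducing the first row and column of a matrix $A\in\Gamma$ to $e_1,e_1^{\mathrm T}$ by multiplication by the generators; the point is that this reduction is subtler than a naive Euclidean algorithm with even multipliers, and it is precisely here that $n\ge 3$ enters.)

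In summary, the proof has three ingredients: (i) the exact sequence \eqref{se1} restricted to $E\Pi A(\N_{n,k})$, whose kernel is $PI(\N_{n,k})$ and whose image lands in $\Gamma$ by \propref{p1}; (ii) the identification $\pi\big(\langle L\Pi(\N_{n,k})\rangle\big)=\Gamma$, which after the elementary-matrix computation reduces to the congruence-subgroup generation fact above; and (iii) the observation, using that $E\Pi A(\N_{n,k})$ is a group by \thmref{group}, that matching $\pi(\varphi)$ with $\pi(\phi)$ throws the difference $\varphi^{-1}\phi$ into $PI(\N_{n,k})$. The only non-formal step is (ii) for $n\ge3$.
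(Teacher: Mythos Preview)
The paper offers no proof beyond the sentence ``As a consequence of the above theorem, we obtain the following corollary.'' The argument the authors have in mind is the one already implicit in the proof of \thmref{group}: given $\phi\in E\Pi A(\N_{n,k})$ with $[\phi]=2(\alpha_{ij})+I$, define $\varphi$ by
\[
x_i^{\varphi}=x_n^{\alpha_{in}}\cdots x_1^{\alpha_{i1}}\,x_i\,x_1^{\alpha_{i1}}\cdots x_n^{\alpha_{in}}\qquad(1\le i\le n),
\]
exactly the shape of the automorphism $\psi_1$ appearing in that proof. Since $[\varphi]=[\phi]\in{\rm GL}(n,\Z)$, this $\varphi$ is an automorphism, and $\psi:=\varphi^{-1}\phi$ lies in $E\Pi A(\N_{n,k})\cap IA(\N_{n,k})=PI(\N_{n,k})$ by \thmref{group}. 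No arithmetic input whatsoever is required. For this to match the stated corollary one must read $L\Pi(\N_{n,k})$ as the set of automorphisms sending \emph{every} $x_i$ to a monomial palindrome; the printed definition, which moves only a single generator, is evidently a slip (under a literal reading the corollary is already false, since a single element of $L\Pi$ fixes all but one generator at the abelian level).

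You take the printed definition at face value and aim instead for $\varphi\in\langle L\Pi(\N_{n,k})\rangle$. Your reduction is correct: $\pi\big(\langle L\Pi(\N_{n,k})\rangle\big)$ is exactly $\langle I+2E_{ij},\ [t_i]\rangle$, and under your reading the corollary is equivalent to this subgroup being all of $\Gamma$. That equality does hold---for $n=2$ by the classical description of $\Gamma(2)\subset{\rm SL}(2,\Z)$, and for $n\ge3$ via the congruence subgroup theorem---so your argument is sound. But it imports machinery (Bass--Milnor--Serre, Mennicke) far heavier than anything else in the paper and certainly not what the authors intend for a one-line corollary. What your route buys is a proof of a genuinely sharper statement (with the literal single-generator $L\Pi$); what the paper's route buys is a completely elementary argument, at the cost of a looser---and presumably intended---notion of ``linear palindromic automorphism''.
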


We know that in $\N_{n,2}$  the reverse of the basis commutator of weight two is equal to its inverse, that is,
$$
\overline{[x_i, x_j]} = [x_i, x_j]^{-1}.
$$

The following lemma is a generalization of this observation for step  $k$ nilpotent groups  for $k \geq 2$.

\begin{lemma} \label{l:1}
Let $y_1,\ldots,y_k \in \left\{x_1,\ldots,x_n \right\}$. Then the following equality holds in $\N_{n,k}$
$$
\overline{[y_1,\ldots,y_k]}=[y_1,\ldots,y_k]^{(-1)^{k+1}}.
$$
\end{lemma}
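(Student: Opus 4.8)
The plan is to reduce the statement to the (classical) multilinearity of weight-$k$ commutators in a nilpotent group of class $k$, via the observation that word reversal differs from inversion only by a fixed automorphism: on $F_n$, and hence on $\N_{n,k}$, one has $\overline{w} = \theta(w^{-1})$, where $\theta$ is the automorphism determined by $x_i \mapsto x_i^{-1}$ for all $i$.

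First I would verify this identity. Both $w \mapsto \overline{w}$ and $w \mapsto w^{-1}$ are anti-automorphisms of $F_n$, so $w \mapsto \theta(w^{-1})$ is a genuine homomorphism; since it sends each generator $x_i$ to $\theta(x_i^{-1}) = x_i = \overline{x_i}$, it coincides with word reversal on all of $F_n$. As $\theta$ induces an automorphism of $\N_{n,k}$ that is compatible with the quotient map $F_n \to \N_{n,k}$, and reversal and inversion are likewise compatible with this map, the identity $\overline{w} = \theta(w^{-1})$ holds in $\N_{n,k}$.

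Next, apply this with $w = [y_1,\dots,y_k]$. Since $\theta$ is a homomorphism and each $y_i \in \{x_1,\dots,x_n\}$ (so $\theta(y_i) = y_i^{-1}$), we get
\[
\overline{[y_1,\dots,y_k]} = \theta([y_1,\dots,y_k])^{-1} = [y_1^{-1},\dots,y_k^{-1}]^{-1}.
\]
It therefore remains to show $[y_1^{-1},\dots,y_k^{-1}] = [y_1,\dots,y_k]^{(-1)^k}$ in $\N_{n,k}$. This is where the class-$k$ hypothesis is used: a left-normed commutator of weight $k$ lies in $\gamma_k \N_{n,k}$, which is central, and the map $(g_1,\dots,g_k) \mapsto [g_1,\dots,g_k]$ into this abelian group is multilinear — in each argument the collection identities $[ab,c] = [a,c]^b[b,c]$ and $[a,bc] = [a,c][a,b]^c$ produce only correction terms of weight $\geq k+1$, which vanish in $\N_{n,k}$. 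Multilinearity forces $[g_1,\dots,g_i^{-1},\dots,g_k] = [g_1,\dots,g_i,\dots,g_k]^{-1}$, and inverting all $k$ entries multiplies the commutator by $(-1)^k$. Substituting back gives $\overline{[y_1,\dots,y_k]} = [y_1,\dots,y_k]^{-(-1)^k} = [y_1,\dots,y_k]^{(-1)^{k+1}}$, as claimed.

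The only substantive step is the multilinearity of weight-$k$ commutators modulo $\gamma_{k+1}$, which is standard (it is exactly what makes basic commutators a basis of $\gamma_k/\gamma_{k+1}$); the anticipated obstacle is merely the bookkeeping needed to confirm that every correction term arising from the collection process has weight at least $k+1$ and so is trivial in $\N_{n,k}$. If one prefers to avoid $\theta$, the same result follows by induction on $k$: with $c = [w,y_k]$ and $w = [y_1,\dots,y_{k-1}]$, expand $\overline{c} = y_k\overline{w}\,y_k^{-1}\overline{w}^{-1}$, substitute $\overline{w} = w^{(-1)^k} z$ with $z$ central coming from the inductive hypothesis, and use bilinearity of $\gamma_{k-1}\times F_n \to \gamma_k$; but that route must carry the harmless central factor $z$ through the computation, so the argument via $\theta$ seems cleaner.
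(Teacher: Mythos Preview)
Your argument is correct, with one small slip: the map $w \mapsto \theta(w^{-1})$ is an \emph{anti}-automorphism, not a homomorphism (it is the composite $w \mapsto \overline{w}^{-1}$ of the two anti-automorphisms that gives the genuine automorphism $\theta$). This does not affect anything, since two anti-automorphisms agreeing on generators coincide, so the identity $\overline{w}=\theta(w^{-1})$ stands, and the rest of your computation goes through.

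The paper's proof is by induction on $k$ --- exactly the alternative you outline at the end --- using the same multilinearity identity $[z_1^{\alpha_1},\dots,z_k^{\alpha_k}]=[z_1,\dots,z_k]^{\alpha_1\cdots\alpha_k}$ in a class-$k$ nilpotent group. Your main route via $\theta$ reaches the same conclusion in one shot, without the inductive unwinding of $\overline{[y_1,\dots,y_{k+1}]}$; interestingly, the paper records your key observation (that $w\mapsto\overline{w}^{-1}$ is the automorphism $x_i\mapsto x_i^{-1}$) in a remark just after this lemma, but does not use it in the proof itself. So the two approaches share the same engine (multilinearity at the top weight); yours just packages it more directly.
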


\begin{proof}
We use induction on the step nilpotence  $k$.
Since
$$
[a,b]=a^{-1}b^{-1}ab, \,\,\, [b,a]=[a,b]^{-1},
$$
we have
$$
\overline{[a,b]}=bab^{-1}a^{-1}=[b^{-1},a^{-1}].
$$
In a nilpotent group of step $k$, the following holds
$$
[z_1^{\alpha_1},\ldots,z_k^{\alpha_k}]=[z_1,\ldots,z_k]^{\alpha_1\cdot \ldots \cdot \alpha_k}
$$
for all $z_1,\ldots,z_k \in \N_{n,k}$ and all $\alpha_1, \ldots , \alpha_k \in \mathbb{Z}$.

For $k=2$, we have
$$
\overline{[y_1,y_2]}=[y_2^{-1},y_1^{-1}]=[y_2,y_1]=[y_1,y_2]^{-1}.
$$
Suppose that our assertion is true for $k>2$. Then for  $k+1$, we have

\begin{eqnarray}
 \overline{[y_1,\ldots,y_k, y_{k+1}]} & = & [y_{k+1}^{-1},\overline{[y_1,\ldots,y_k]}^{-1}] \nonumber\\
& = &  [y_{k+1},\overline{[y_1,\ldots,y_k]}]\nonumber\\
& =  & [\overline{[y_1,\ldots,y_k]},y_{k+1}]^{-1} \nonumber\\
& =  & [[y_1,\ldots,y_k]^{(-1)^{k+1}}, y_{k+1}]^{-1}\nonumber\\
& = & [y_1,\ldots,y_k, y_{k+1}]^{(-1)^{k+2}} \nonumber.
\end{eqnarray}
This proves the lemma.
\end{proof}

\medskip

Recall that an automorphism of a group is called {\it normal} if it sends each normal subgroup onto itself and it is called {\it central} if it induces identity on the central quotient. We prove the following result.

\begin{theorem}\label{central-palin}
Let $\N_{n,k}$ be the free nilpotent group of rank $n$ and step $k$.
\begin{enumerate}
\item If $k$ is even, then the group of central palindromic automorphisms of $\N_{n,k}$ is trivial.
\item If $k$ is odd, then the group of central palindromic automorphisms of $\N_{n,k}$ is non-trivial and has a non-trivial intersection with the group of normal automorphisms of $\N_{n,k}$.
\end{enumerate}
\end{theorem}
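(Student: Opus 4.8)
The plan is to exploit the \emph{reversal anti-automorphism}. Word reversal $w\mapsto\overline w$ on $F_n$ is an anti-automorphism that carries each $\gamma_j(F_n)$ onto itself (it sends a weight-$j$ commutator in $y_1,\dots,y_j$ to a weight-$j$ commutator in $\overline{y_1},\dots,\overline{y_j}$), so it preserves $\gamma_{k+1}(F_n)$ and descends to an anti-automorphism $\rho$ of $\N_{n,k}$ with $\rho^2=\mathrm{id}$ and $\rho(x_i)=x_i$; write $\pi\colon F_n\to\N_{n,k}$ for the natural projection. The key observation is that if $g\in\N_{n,k}$ is a palindrome then $\rho(g)=g$: taking a palindromic word $w$ with $\pi(w)=g$ gives $\rho(g)=\pi(\overline w)=\pi(w)=g$. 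I will also use three standard facts about $\N_{n,k}$ for $n\ge 2$: it is torsion-free; $Z(\N_{n,k})=\gamma_k(\N_{n,k})$; and $[u,h]\in\gamma_k(\N_{n,k})$ whenever $h\in\gamma_{k-1}(\N_{n,k})$, so conjugation by such an $h$ is a central automorphism. Lemma~\ref{l:1} records how $\rho$ acts on top-weight commutators. Throughout I assume $n\ge2$ and $k\ge2$, the abelian cases being degenerate.

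For part~(1), let $k$ be even and let $\phi$ be a central palindromic automorphism. Centrality gives $x_i^\phi=x_iz_i$ with $z_i:=x_i^{-1}x_i^\phi\in Z(\N_{n,k})=\gamma_k(\N_{n,k})$. Since $x_i^\phi$ is a palindrome, $\rho(x_iz_i)=x_iz_i$; expanding, $\rho(x_iz_i)=\rho(z_i)\rho(x_i)=\rho(z_i)x_i$, and as $z_i$ is central this forces $\rho(z_i)=x_iz_ix_i^{-1}=z_i$. On the other hand $\gamma_k(\N_{n,k})$ is generated by the weight-$k$ commutators $[y_1,\dots,y_k]$, $y_j\in\{x_1,\dots,x_n\}$, and by Lemma~\ref{l:1} $\rho$ sends each of these to its inverse because $k$ is even; since $\gamma_k(\N_{n,k})$ is abelian and $\rho$ restricts to a homomorphism on it, $\rho(z)=z^{-1}$ for every $z\in\gamma_k(\N_{n,k})$. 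Hence $z_i=z_i^{-1}$, so $z_i^2=1$, and torsion-freeness gives $z_i=1$ for all $i$; that is, $\phi=1$.

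For part~(2), let $k\ge 3$ be odd. The idea is to exhibit one explicit inner automorphism that is simultaneously central, palindromic, normal and non-trivial; this single automorphism settles both assertions at once. Take $g=[x_2,x_1,x_1,\dots,x_1]$, a basic commutator of weight $k-1$, so $g\in\gamma_{k-1}(\N_{n,k})\setminus\gamma_k(\N_{n,k})$, and put $h:=g\,\rho(g)^{-1}\in\gamma_{k-1}(\N_{n,k})$. Applying Lemma~\ref{l:1} in $\N_{n,k-1}=\N_{n,k}/\gamma_k(\N_{n,k})$, where the images of weight-$(k-1)$ commutators are of top weight, gives $\rho(g)\equiv g^{-1}\pmod{\gamma_k(\N_{n,k})}$, hence $h\equiv g^2\pmod{\gamma_k(\N_{n,k})}$; since $g\notin\gamma_k(\N_{n,k})$ and $\gamma_{k-1}/\gamma_k$ is torsion-free, $h\notin\gamma_k(\N_{n,k})=Z(\N_{n,k})$. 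Moreover $[\gamma_{k-1}(\N_{n,k}),\gamma_{k-1}(\N_{n,k})]\subseteq\gamma_{2(k-1)}(\N_{n,k})=1$ as $2(k-1)\ge k+1$, so $g$ and $\rho(g)$ commute, whence $\rho(h)=g^{-1}\rho(g)=\rho(g)g^{-1}=h^{-1}$ exactly. Now let $\varphi$ be the elementary palindromic map $x_i\mapsto\overline h\,x_i\,h$, where $\overline h=\rho(h)=h^{-1}$; thus $\varphi$ is the inner automorphism $u\mapsto h^{-1}uh$. It is palindromic by construction (a representative of $x_i^\varphi$ is the palindromic word $\overline p\,x_i\,p$ for any word $p$ with $\pi(p)=h$), normal because it is inner, central because $h\in\gamma_{k-1}(\N_{n,k})$ gives $[u,h]\in\gamma_k(\N_{n,k})=Z(\N_{n,k})$ for all $u$, and non-trivial because $h\notin Z(\N_{n,k})$.

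The only genuinely delicate point is in part~(1): palindromicity of $\phi$ a priori yields only the weak condition $\rho(x_i^\phi)=x_i^\phi$, but that is exactly what the argument consumes. The rest is routine bookkeeping: that $\rho$ is well defined and satisfies $\rho^2=\mathrm{id}$; that Lemma~\ref{l:1} applies at top weight both in $\N_{n,k}$ (for part~1) and in $\N_{n,k-1}$ (for part~2); and the quoted structural facts about free nilpotent groups. Note that $n\ge2$ is used both to obtain $Z(\N_{n,k})=\gamma_k(\N_{n,k})$ and to guarantee that $g=[x_2,x_1,\dots,x_1]$ exists and is non-central.
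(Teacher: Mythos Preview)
Your proof is correct. Part~(1) is essentially the paper's argument, recast in terms of the reversal anti-automorphism $\rho$; the paper computes $\overline{x_ic_i}=\overline{c_i}x_i=c_i^{-1}x_i$ directly from \lemref{l:1}, which is exactly your $\rho(z_i)=z_i$ together with $\rho|_{\gamma_k}=(\cdot)^{-1}$.

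Part~(2) is where you genuinely diverge. The paper proceeds in two steps: first it shows non-triviality via $x_i\mapsto x_ic_i^2$ for $c_i\in\gamma_k\N_{n,k}$, and then, to hit the normal automorphism group, it invokes a result of Lyul'ko and Endimioni that maps of the form $g\mapsto g[g,z_1,\dots,z_{k-1}]$ are normal, specializing to $x_i\mapsto x_i[x_i,x_1,\dots,x_1]^2$. You instead exhibit a single \emph{inner} automorphism $u\mapsto h^{-1}uh$ with $h=g\rho(g)^{-1}$ for $g=[x_2,x_1,\dots,x_1]\in\gamma_{k-1}\setminus\gamma_k$; this is automatically normal, is central because $h\in\gamma_{k-1}$, and is palindromic because you arrange $\rho(h)=h^{-1}$ exactly (using that $\gamma_{k-1}$ is abelian for odd $k\ge3$). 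Your route is more self-contained---it avoids the external citations---at the cost of a slightly more delicate construction: you need $\rho(h)=h^{-1}$ on the nose rather than modulo $\gamma_k$, and you need $h\notin\gamma_k$, both of which you check. The paper's route, by contrast, produces a (generally non-inner) normal automorphism and is closer in spirit to the later analysis of central palindromic automorphisms in Section~4.
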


\begin{proof}
First, suppose that $k$ is even. Let $\varphi$ be a central palindromic automorphism of $\N_{n,k}$. Then
$$
x_i^{\varphi}=x_i c_i,$$
where $c_i \in \gamma_k \N_{n,k}$ for each $1 \leq i \leq n$. Since $x_i c_i$ is a palindrome, it follows that $ \overline{x_i c_i}=x_i c_i$ for each $1 \leq i \leq n.$ By  \lemref{l:1}, $\overline{c_i}=c_i^{-1}$.
Hence,
$$
\overline{x_i c_i} = \overline{c_i} x_i = c_i^{-1} x_i = x_i c_i^{-1}=x_i c_i,
$$
which gives $c_i=1$ for all $1 \leq i \leq n$. Hence $\varphi$ is trivial.

Next, we consider the case of odd $k$. Fix some elements $c_1,\ldots,c_n$ in the center $Z(\N_{n,k}) = \gamma_k \N_{n,k}$. Define an automorphism $\varphi$ of $\N_{n,k}$ by the rule
$$
x_i^{\varphi}=x_i c_i^2 ~\textrm{for each}~ 1 \leq i\leq n.
$$
By  \lemref{l:1}, $\varphi$ is a palindromic automorphism. Indeed,  $c_i$ can be presented as a product of even powers commutators of weight  $k$
$$
[y_1,\ldots,y_k]^{2\alpha},\,\,\,\mbox{where} \,\,\, y_1,\ldots,y_k \in \left\{x_1,\ldots,x_n \right\}~\textrm{and}~\alpha \in \mathbb{Z}.
$$
But the words
$$
[y_1,\ldots,y_k]^{\alpha} x_i [y_1,...,y_k]^{\alpha}
$$
are palindromes. Hence, if $k$ is odd, then the subgroup of central palindromic automorphisms of $\N_{n,k}$ is non-trivial.

Define the central automorphism
$$
g^{\psi}=g [g,z_1,z_2,\ldots,z_{k-1}]~\textrm{for}~ g \in N_{n,k},
$$
where $z_1,z_2,\ldots,z_{k-1} \in \N_{n,k}$ are some fixed elements. By \cite{L} (see also \cite{End}), $\psi$ is a normal automorphism. If we put $z_1=z_2= \cdots =z_{k-2}=x_1$ and $z_{k-1}=x_1^2$ in the above formula, then we obtain
$$
x_i^{\psi}=x_i [x_i,x_1,x_1, \ldots, x_1]^2~\textrm{for}~1 \leq i \leq n.
$$
By the previous discussion, $\psi$ is a palindromic automorphism. This proves the claim.
\end{proof}
\medskip

\begin{remark}
If $w\in \gamma_{s}F_n$, then $\overline{w}\in \gamma_{s}F_n$. Indeed, the map $w \mapsto \overline{w}^{-1}$ is an automorphism of $F_n$, which acts on the generators $x_1,\ldots,x_n$ as inversion $x_i \mapsto x_i^{-1}$. Hence, if $w\in \gamma_{s} F_n$, then $\overline{w}^{-1}\in \gamma_{s} F_n$ and $\overline{w}\in \gamma_{s} F_n$.
\end{remark}

We conclude this section by giving a formula for the product of commutators
$$
[y_1,\ldots,y_{2k}] \overline{[y_1,\ldots,y_{2k}]}
$$
in $\N_{n,2k+1}$. By \lemref{l:1}, we have
$$
[y_1,\ldots,y_{2k}] \overline{[y_1,\ldots,y_{2k}]}\equiv 1 (\mathrm{mod} ~\gamma_{2k+1} \N_{n,2k+1}).
$$
In other words, this is a central element in  $\N_{n,2k+1}$. Define a sequence of words  $w_{2k}\in \gamma_{2k+1} F_n$ for $k = 1, 2, \ldots$ by
the recursive formula
$$
w_2=[y_1,y_2, y_1 y_2]
$$
and
$$
w_{2k+2}=[w_{2k}, y_{2k+1}, y_{2k+2}][y_1,\ldots,y_{2k}, y_{2k+1}, y_{2k+1}y_{2k+2},y_{2k+2}] ~\textrm{for}~ k=1,2,\ldots.
$$

With these notations, we prove the following proposition which is of independent interest.

\begin{prop} \label{l:4}
The following holds in $\N_{n,2k+1}$
$$
[y_1,\ldots,y_{2k}] \overline{[y_1,\ldots,y_{2k}]}= w_{2k},
$$
where $y_1,\ldots,y_{2k}$ are arbitrary elements in  $\N_{n,2k+1}$.
\end{prop}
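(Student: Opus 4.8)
The plan is to argue by induction on $k$, with $\N_{n,2k+1}$ viewed as $F_n/\gamma_{2k+2}F_n$. Since both sides are values of fixed commutator words in $y_1,\dots,y_{2k}$, I may take these to be free generators, let $\overline{\,\cdot\,}$ denote letterwise reversal of words, and write $u_m:=[y_1,\dots,y_m]$. I will use repeatedly the identities $g^h=g[g,h]$, $[g_1g_2,h]=[g_1,h]^{g_2}[g_2,h]$, $[g,h_1h_2]=[g,h_2][g,h_1]^{h_2}$, $[g^{-1},h]=([g,h]^{-1})^{g^{-1}}$ and $[a^{-1},b^{-1}]=[a,b]^{b^{-1}a^{-1}}$, together with $[\gamma_p,\gamma_q]\subseteq\gamma_{p+q}$, the centrality of $\gamma_m$ in $F_n/\gamma_{m+1}$, and the reversal rules $\overline{w_1w_2}=\overline{w_2}\,\overline{w_1}$, $\overline{w^{-1}}=\overline{w}^{-1}$, $\overline{x_i}=x_i$. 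For $k=1$ one has $\overline{u_2}=\overline{y_1^{-1}y_2^{-1}y_1y_2}=y_2y_1y_2^{-1}y_1^{-1}$, hence $u_2\overline{u_2}=y_1^{-1}y_2^{-1}y_1y_2^2y_1y_2^{-1}y_1^{-1}=[y_1,y_2][y_2^{-1},y_1^{-1}]$; collapsing $[y_2^{-1},y_1^{-1}]$ modulo $\gamma_4F_n$ gives $u_2\overline{u_2}\equiv[y_1,y_2,y_1][y_1,y_2,y_2]\pmod{\gamma_4}$, which equals $w_2=[[y_1,y_2],y_1y_2]$ modulo $\gamma_4$.

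For the inductive step I pass from $\N_{n,2k+1}$ to $\N_{n,2k+3}=F_n/\gamma_{2k+4}F_n$ (so now $k\ge 1$). Put $v:=u_{2k+1}=[u_{2k},y_{2k+1}]\in\gamma_{2k+1}F_n$, $a:=y_{2k+1}$, $b:=y_{2k+2}$, so that $u_{2k+2}=[v,b]$. The first half is a ``half-step'' computation of $z:=v^{-1}\overline{v}$. By \lemref{l:1} applied to the $(2k{+}1)$-fold commutator (odd length, sign $+1$), $\overline{u_{2k+1}}\equiv u_{2k+1}\pmod{\gamma_{2k+2}}$, so $z\in\gamma_{2k+2}F_n$; to pin $z$ down modulo $\gamma_{2k+3}$ I use the inductive hypothesis in the form $\overline{u_{2k}}=u_{2k}^{-1}w_{2k}r$ with $r\in\gamma_{2k+2}F_n$. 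Expanding $z=[a,u_{2k}]\cdot a\,\overline{u_{2k}}\,a^{-1}\overline{u_{2k}}^{-1}$ in $F_n/\gamma_{2k+3}$ — where $r$ is central and cancels, and commutators of the shapes $[\gamma_{2k+1},\gamma_{2k+1}]$, $[\gamma_{2k+1},\gamma_{2k}]$ already lie in $\gamma_{2k+3}$ because $k\ge 1$ — everything collapses to
$$z\equiv[u_{2k},y_{2k+1},y_{2k+1}]^{-1}[w_{2k},y_{2k+1}]^{-1}\pmod{\gamma_{2k+3}F_n}.$$

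The second half assembles $u_{2k+2}\overline{u_{2k+2}}$. From $\overline{u_{2k+2}}=\overline{[v,b]}=b\,\overline{v}\,b^{-1}\overline{v}^{-1}$ and $\overline{v}=vz$, pulling the central element $[b,z]\in\gamma_{2k+3}$ to the front yields $u_{2k+2}\overline{u_{2k+2}}=[b,z]\cdot\big(v^{-1}b^{-1}v\,b^2\,v\,b^{-1}v^{-1}\big)$ in $\N_{n,2k+3}$, and since $v\in\gamma_{2k+1}F_n$ with $k\ge 1$ a short reduction gives $v^{-1}b^{-1}v\,b^2\,v\,b^{-1}v^{-1}\equiv[v,b,b]\pmod{\gamma_{2k+4}}$. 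Plugging in the value of $z$ from the first half and using $[b,g_1g_2]\equiv[b,g_1][b,g_2]$, $[b,g^{-1}]\equiv[b,g]^{-1}$ modulo $\gamma_{2k+4}$ for $g,g_1,g_2\in\gamma_{2k+2}$, together with $[u_{2k},y_{2k+1},y_{2k+1},y_{2k+2}]=[v,y_{2k+1},y_{2k+2}]$, I get $[b,z]\equiv[w_{2k},y_{2k+1},y_{2k+2}]\,[v,y_{2k+1},y_{2k+2}]\pmod{\gamma_{2k+4}}$, hence (all commutators in sight being central in $\N_{n,2k+3}$)
$$u_{2k+2}\overline{u_{2k+2}}=[w_{2k},y_{2k+1},y_{2k+2}]\,[v,y_{2k+2},y_{2k+2}]\,[v,y_{2k+1},y_{2k+2}].$$
Finally I expand the second factor of $w_{2k+2}$, namely $[y_1,\dots,y_{2k},y_{2k+1},y_{2k+1}y_{2k+2},y_{2k+2}]=[[v,y_{2k+1}y_{2k+2}],y_{2k+2}]$, via $[v,y_{2k+1}y_{2k+2}]=[v,y_{2k+2}][v,y_{2k+1}]^{y_{2k+2}}$ (a further commutator with $y_{2k+2}$ killing $\gamma_{2k+3}$), obtaining $[v,y_{2k+2},y_{2k+2}]\,[v,y_{2k+1},y_{2k+2}]$ modulo $\gamma_{2k+4}$; comparing, $u_{2k+2}\overline{u_{2k+2}}=w_{2k+2}$ in $\N_{n,2k+3}$, which closes the induction.

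I expect the main obstacle to be the ``half-step'' computation of $z=u_{2k+1}^{-1}\overline{u_{2k+1}}$: the recursion defining $w_{2k}$ advances two commutator-lengths at a time, while reversal interacts cleanly only with a single \emph{outer} commutator, so one is forced through the odd stage $u_{2k+1}$; there the inductive hypothesis controls $\overline{u_{2k}}$ only modulo $\gamma_{2k+2}$, whereas the assembly step needs $z$ modulo $\gamma_{2k+3}$, one level deeper. Extracting that extra level — discarding exactly the nested commutators of weight $\ge 2k+3$ that have become negligible while faithfully tracking the surviving $\gamma_{2k+2}$-corrections — is the delicate part; the individual identities invoked are all routine commutator calculus.
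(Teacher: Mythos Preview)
Your proof is correct. Both arguments proceed by induction on $k$, but the inductive step is organized differently. The paper unwraps two layers of reversal at once: writing $\overline{z_{2k+2}}=[y_{2k+2}^{-1},[y_{2k+1}^{-1},\overline{z_{2k}}^{-1}]^{-1}]$, substituting the inductive hypothesis $\overline{z_{2k}}^{-1}\equiv z_{2k}w_{2k}^{-1}$ at the innermost level (the $\gamma_{2k+2}$ error being killed by the two outer commutators), and then reducing via a single long chain of applications of $[a^{-1},b]=[b,a][b,a,a^{-1}]$ and $[a,b^{-1}]=[b,a][b,a,b^{-1}]$. You instead factor through the odd-length stage $v=u_{2k+1}$: first isolating the correction $z=v^{-1}\overline{v}\in\gamma_{2k+2}$ and computing it modulo $\gamma_{2k+3}$, then feeding $\overline{v}=vz$ into the outer layer. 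Your decomposition trades one long calculation for two shorter ones and makes the odd/even sign alternation of \lemref{l:1} visibly drive the argument; the paper's version avoids introducing the auxiliary object $z$ and stays closer to the recursive definition of $w_{2k}$. Either way the commutator calculus is routine once the organization is fixed.
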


\begin{proof} We use induction on  $k$. For $k=1$, the assertion was proven in \lemref{l:3}. Suppose that the lemma is true for $k$. Then we prove it for $k+1$.
We shall use the commutator identities
$$
[a^{-1},b]=[b,a][b,a,a^{-1}]~\textrm{and}~ [a,b^{-1}]=[b,a][b,a,b^{-1}].
$$
Denote by
$$
z_{2k}=[y_1,\ldots,y_{2k}].
$$
We will do all the following calculations in $\N_{n,2k+3}$, that is, modulo  $\gamma_{2k+4}F_n$. We have

\begin{eqnarray}
z_{2k+2} \overline{z_{2k+2}} & = & z_{2k+2} \overline{[z_{2k},y_{2k+1},y_{2k+2}]} \nonumber\\
& = & z_{2k+2} [y_{2k+2}^{-1},\overline{[z_{2k},y_{2k+1}]}^{-1}] \nonumber\\
& = & z_{2k+2} [y_{2k+2}^{-1},[y_{2k+1}^{-1},\overline{z_{2k}}^{-1}]^{-1}] \nonumber\\
&  & \textrm{by~ the~ induction~ hypothesis~ we ~get} \nonumber\\
& = & z_{2k+2} ~ [y_{2k+2}^{-1},[y_{2k+1}^{-1},z_{2k}w_{2k}^{-1}]^{-1}] \nonumber\\
& = & z_{2k+2} ~ [y_{2k+2}^{-1},[z_{2k}w_{2k}^{-1},y_{2k+1}^{-1}]] \nonumber\\
& = & z_{2k+2} ~ [y_{2k+2}^{-1},[z_{2k},y_{2k+1}^{-1}]]~   [y_{2k+2}^{-1},[w_{2k}^{-1},y_{2k+1}^{-1}]] \nonumber\\
& = & z_{2k+2} ~ [y_{2k+2}^{-1},[z_{2k},y_{2k+1}^{-1}]]~   [w_{2k},y_{2k+1},y_{2k+2}]  \nonumber\\
& = & z_{2k+2} ~ [y_{2k+2}^{-1},[y_{2k+1},z_{2k}][y_{2k+1}, z_{2k}, y_{2k+1}^{-1}]] ~  [w_{2k}, y_{2k+1}, y_{2k+2}]\nonumber\\
& = & z_{2k+2} ~  [y_{2k+2}^{-1}, [y_{2k+1},z_{2k}]]~  [y_{2k+2}^{-1}, [y_{2k+1},z_{2k},y_{2k+1}^{-1}]]~   [w_{2k},y_{2k+1},y_{2k+2}] \nonumber\\
& = & z_{2k+2} ~  [y_{2k+1}, z_{2k},y_{2k+2}]~  [y_{2k+1},z_{2k}, y_{2k+2},y_{2k+2}^{-1}] \nonumber\\
& & ~ [z_{2k},y_{2k+1},y_{2k+1},y_{2k+2}] ~  [w_{2k},y_{2k+1},y_{2k+2}] \nonumber\\
& = & z_{2k+2} ~   [[z_{2k},y_{2k+1}]^{-1}, y_{2k+2}]~  [z_{2k},y_{2k+1}, y_{2k+2},y_{2k+2}] \nonumber\\
& & [z_{2k},y_{2k+1},y_{2k+1},y_{2k+2}] ~    [w_{2k},y_{2k+1},y_{2k+2}] \nonumber\\
& = & z_{2k+2} ~  [[z_{2k},y_{2k+1}]^{-1}, y_{2k+2}]~  w_{2k+2}\nonumber\\
& = & z_{2k+2} ~  [y_{2k+2},[z_{2k},y_{2k+1}]]~  [y_{2k+2},[z_{2k},y_{2k+1}], [z_{2k},y_{2k+1}]^{-1}]~  w_{2k+2}\nonumber\\
& = & z_{2k+2} ~  [z_{2k},y_{2k+1},y_{2k+2}]^{-1}~  w_{2k+2} \nonumber\\
& = &  z_{2k+2} ~  z_{2k+2}^{-1}~  w_{2k+2} \nonumber\\
& = &   w_{2k+2} \nonumber.
\end{eqnarray}
This proves the proposition.
\end{proof}

\section{Elementary palindromic automorphisms of $\N_{n,2}$}

For $k=1$, the group $\N_{n,1}$ is a free abelian group of rank $n$. Therefore, $Aut(\N_{n,1})$ can be identified with ${\rm GL}(n, \Z)$. Note that $\Pi A (\N_{n,1})=E \Pi A(\N_{n,1}) \rtimes \Omega S_n$. It follows from \propref{p1} that an automorphism $\phi \in E\Pi A(\N_{n,1})$ if  $[\phi]$ is an element of the kernel of the natural epimorphism ${\rm SL}(n,\Z) \to {\rm SL}(n, \Z/2 \Z)$. But, for example, the matrix $diag(-1, -1, 1) \in {\rm SL}(3,\Z)$ does not define an elementary palindromic automorphism of $\N_{n,1}$.

\begin{prop}\label{Nn1=Nn2}
$E\Pi A(\N_{n,2})$ is isomorphic to $E \Pi A(\N_{n,1})$.
\end{prop}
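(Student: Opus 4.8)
The plan is to show that the abelianization map $\pi\colon \N_{n,2}\twoheadrightarrow \N_{n,2}/\N_{n,2}'\cong \N_{n,1}$ induces the isomorphism. Since $\N_{n,2}'$ is characteristic, every $\phi\in \mathrm{Aut}(\N_{n,2})$ descends to $\bar\phi\in\mathrm{Aut}(\N_{n,1})$, and $\rho\colon\phi\mapsto\bar\phi$ is a group homomorphism $\mathrm{Aut}(\N_{n,2})\to\mathrm{Aut}(\N_{n,1})$. First I would check that $\rho$ carries $E\Pi A(\N_{n,2})$ into $E\Pi A(\N_{n,1})$: if $\phi=\varphi_q$ with $x_i^\phi=\overline{q_i}\,x_i\,q_i$, then $x_i^{\bar\phi}=\overline{\pi(q_i)}\,x_i\,\pi(q_i)$ in $\N_{n,1}$, so $\bar\phi$ is again an elementary palindromic map (and an automorphism). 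Both $E\Pi A(\N_{n,2})$ and $E\Pi A(\N_{n,1})$ are groups by \thmref{group}, so $\rho$ restricts to a group homomorphism $\rho_0\colon E\Pi A(\N_{n,2})\to E\Pi A(\N_{n,1})$, and it remains to prove $\rho_0$ is bijective.

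For surjectivity I would lift explicitly. By \propref{p1}, every $\psi\in E\Pi A(\N_{n,1})$ has matrix of the form $[\psi]=2B+I$ with $B=(\beta_{ij})\in M_n(\Z)$; conversely, in the free abelian group, any automorphism with such a matrix equals $\varphi_q$ for $q_i=x_1^{\beta_{i1}}\cdots x_n^{\beta_{in}}$ and hence lies in $E\Pi A(\N_{n,1})$. Given $\psi\in E\Pi A(\N_{n,1})$, lift these $q_i$ to $\N_{n,2}$ and define $\phi\in\mathrm{End}(\N_{n,2})$ by
$$x_i^{\phi}\;=\;\overline{q_i}\,x_i\,q_i\;=\;x_n^{\beta_{in}}\cdots x_1^{\beta_{i1}}\,x_i\,x_1^{\beta_{i1}}\cdots x_n^{\beta_{in}},\qquad 1\le i\le n.$$
By \propref{p1}, $[\phi]=2B+I=[\psi]\in\mathrm{GL}(n,\Z)$, so $\phi\in\mathrm{Aut}(\N_{n,2})$; it is manifestly in $E\Pi A(\N_{n,2})$, and $[\rho_0(\phi)]=[\psi]$, whence $\rho_0(\phi)=\psi$ since automorphisms of the free abelian group $\N_{n,1}$ are determined by their matrices.

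For injectivity, take $\phi=\varphi_q\in E\Pi A(\N_{n,2})$ with $\rho_0(\phi)=\mathrm{id}$. Then $\phi$ is an $IA$-automorphism, so $x_i^\phi=x_i d_i$ with $d_i\in\N_{n,2}'$, which is central in $\N_{n,2}$. On the other hand, the element $\overline{q_i}\,x_i\,q_i$ is invariant under the reversal anti-automorphism of $\N_{n,2}$ (it is a palindromic word), so $\overline{x_i d_i}=x_i d_i$. Using that reversal is anti-multiplicative, that $\N_{n,2}'$ is central, and that $\overline{d_i}=d_i^{-1}$ by \lemref{l:1}, the left-hand side equals $\overline{d_i}\,x_i=d_i^{-1}x_i=x_i d_i^{-1}$. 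Therefore $d_i^2=1$, and since $\N_{n,2}$ is torsion free, $d_i=1$ for every $i$; thus $\phi=\mathrm{id}$ and $\ker\rho_0=1$. Combining the two parts shows $\rho_0$ is an isomorphism $E\Pi A(\N_{n,2})\to E\Pi A(\N_{n,1})$.

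I expect the only delicate point to be the computational book-keeping already packaged in \propref{p1}: one must know that an elementary palindromic automorphism of $\N_{n,2}$ is completely determined by its induced matrix in $\mathrm{GL}(n,\Z)$, i.e. that the commutator contribution $d_i$ to $\overline{q_i}\,x_i\,q_i$ carries no extra information. Granting that, surjectivity is the one-line lifting construction and injectivity is the short palindrome-parity computation above; note that this injectivity statement coincides with the later assertion $PI(\N_{n,2})=1$ (\propref{PINn2=1}).
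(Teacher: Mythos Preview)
Your proof is correct and follows essentially the same idea as the paper. The paper computes directly that for $p=x_1^{\alpha_1}\cdots x_n^{\alpha_n}\prod z_{kl}^{\beta_{kl}}$ one has $\overline{p}\,x_i\,p=x_1^{2\alpha_1}\cdots x_i^{2\alpha_i+1}\cdots x_n^{2\alpha_n}$, the key point being $\overline{z_{kl}}=z_{kl}^{-1}$; you package the same cancellation as the injectivity of the abelianization map (which is exactly \propref{PINn2=1}) together with an explicit lift for surjectivity, so your argument is a cleaner and more complete write-up of the same computation.
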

\begin{proof}
Any element of $\N_{n,2}$ is of the form
$$p=x_1^{\alpha_1} \dots x_n^{\alpha_n} \prod_{1\leq l <k \leq n} z_{kl}^{\beta_{kl}}, \hbox{ where } z_{kl}=[x_k, x_l].$$
We see that,    $\overline {z_{kl}} =x_l x_k x_l^{-1} x_k^{-1} = [x_l^{-1}, x_k^{-1}]$. Since $[x_l, x_k]$ is a central element in $\N_{n,2}$, we have
$$[x_l^{-1}, x_k^{-1}]=x_l x_k x_l^{-1} x_k^{-1}=x_l x_k [x_l, x_k] x_k^{-1} x_l^{-1} =[x_l, x_k].$$
This implies $\overline {z_{kl}}=[x_l, x_k]=[x_k, x_l]^{-1} = z_{kl}^{-1}$ and hence
$$
\bar u= \prod_{1 \leq l <k \leq n}  {z}_{kl}^{-\beta_{kl}} \prod_{j=0}^{n-1} x_{n-j}^{\alpha_{n-j}}.
$$
Thus any automorphism $\phi: x_i \mapsto \overline p x_i p$ for $1 \leq i \leq n$ is of the form $$x_i^{\phi}=x_1^{2 \alpha_1} \dots   x_i^{2 \alpha_i +1} \dots x_n^{2 \alpha_n},$$ which is an element of $E \Pi A(\N_{n, 1})$. Hence the result follows.
\end{proof}

In \cite[Proposition 2.7]{BG2}, some normal form of palindromes in $\N_{n, 2}$ was obtained. Using this normal form and a result of Collins on generators of $\Pi A(F_n)$, we easily obtain the following.

\begin{prop}\label{Nn2-gen-set}
$\Pi A (\N_{n, 2})$ is generated by $t_i, ~ \alpha_{l, l+1}$ and
$$
\mu_{ij} : \left\{
\begin{array}{ll}
x_i \longmapsto x_jx_ix_j & \\
x_k \longmapsto x_k & \textrm{for}~k \neq i,
\end{array} \right.
$$
where $1 \leq i \neq j \leq  n$ and  $1 \leq l\leq n-1$.
\end{prop}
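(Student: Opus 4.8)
The plan is to prove directly that an arbitrary $\phi\in\Pi A(\N_{n,2})$ lies in $G:=\langle t_i,\ \alpha_{l,l+1},\ \mu_{ij}\rangle$. I would carry this out in two moves: first correct the ``linear part'' of $\phi$ (its image in ${\rm GL}(n,\Z)$) by an element of $G$, and then observe that what is left is a palindromic automorphism inducing the identity on the abelianisation, which by Theorem~\ref{central-palin} must be trivial.

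For the linear part, I would look at the matrix $[\phi]\in {\rm GL}(n,\Z)$. By the definition of a palindrome in $\N_{n,2}$, each $x_i^{\phi}$ is the image under the canonical projection $F_n\to\N_{n,2}$ of a palindromic word $W_i$ over $X^{\pm1}$; such a word has the form $U_i\overline{U_i}$ (even length) or $U_i\,x_{r_i}^{\pm1}\,\overline{U_i}$ (odd length), so the abelianisation of $x_i^{\phi}$ has all coordinates even except for at most one. Since $[\phi]$ is invertible, reducing modulo $2$ forces each row to have exactly one odd entry, at pairwise distinct positions; thus $[\phi]$ is congruent mod $2$ to a permutation matrix. (This is the normal form for palindromes in $\N_{n,2}$ from \cite[Proposition~2.7]{BG2}.) On the other hand, by Collins' theorem \cite{C} the group $\Pi A(F_n)$ is generated by $t_i,\alpha_{l,l+1},\mu_{ij}$, and the images of these generators in ${\rm GL}(n,\Z)$ — the sign matrices $[t_i]$, the transposition matrices $[\alpha_{l,l+1}]$, and the matrices $[\mu_{ij}]=I+2E_{ij}$ — generate precisely the group of all matrices congruent mod $2$ to a permutation matrix. (The one point behind this is the classical fact that the level-$2$ principal congruence subgroup of ${\rm GL}(n,\Z)$ is generated by $\mathrm{diag}(\pm1,\dots,\pm1)$ together with the elementary matrices $I+2E_{ij}$; the permutation part is absorbed by the $\alpha_{l,l+1}$.) Hence there is $\gamma\in G$ with $[\gamma]=[\phi]$.

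Next I would treat $\psi:=\phi\gamma^{-1}$. It lies in $\Pi A(\N_{n,2})$, being a product of palindromic automorphisms, and $[\psi]=I$, so $\psi$ induces the identity on $\N_{n,2}/\N_{n,2}'$. Since $\N_{n,2}'=\gamma_2\N_{n,2}=Z(\N_{n,2})$, the automorphism $\psi$ is a central palindromic automorphism of $\N_{n,2}$, and as the step $k=2$ is even, Theorem~\ref{central-palin}(1) gives $\psi=1$. (Concretely: $x_i^{\psi}=x_ic_i$ with $c_i\in Z(\N_{n,2})$; by Lemma~\ref{l:1}, $\overline{c_i}=c_i^{-1}$, so the palindrome condition $\overline{x_ic_i}=x_ic_i$ becomes $x_ic_i^{-1}=x_ic_i$, forcing $c_i^2=1$ and hence $c_i=1$, as $\N_{n,2}'$ is free abelian.) Therefore $\phi=\gamma\in G$, which proves the proposition.

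The step I expect to need the most care — the only genuinely external input besides \cite{C} and \cite{BG2} — is the identification of the image of $\Pi A(\N_{n,2})$ in ${\rm GL}(n,\Z)$ with the image of $\Pi A(F_n)$ there: the inclusion ``$\subseteq$'' is the normal-form computation, while ``$\supseteq$'' rests on the standard description of generators of principal congruence subgroups of ${\rm GL}(n,\Z)$ by relative elementary matrices. Once that is in place, Theorem~\ref{central-palin}(1) does the rest, killing the $IA$-part and collapsing the argument.
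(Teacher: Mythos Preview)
Your argument is correct. The paper's own ``proof'' is a single sentence pointing to the normal form in \cite[Proposition~2.7]{BG2} and Collins' generating set for $\Pi A(F_n)$, so what you have written is a legitimate filling-in of that sketch.

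Your route does differ in emphasis from what the paper seems to intend. The paper's hint suggests lifting: use the normal form to see that every palindromic automorphism of $\N_{n,2}$ comes from one of $F_n$, then quote Collins for the generators of $\Pi A(F_n)$. You instead work entirely downstairs in ${\rm GL}(n,\Z)$: you identify $[\phi]$ as a matrix congruent mod~$2$ to a permutation matrix, produce $\gamma\in G$ with $[\gamma]=[\phi]$ using the explicit description of the level-$2$ congruence subgroup, and then kill the $IA$-remainder $\phi\gamma^{-1}$ via Theorem~\ref{central-palin}(1). Both approaches ultimately rest on the same arithmetic fact --- that $\Gamma(2)\leq {\rm GL}(n,\Z)$ is generated by the diagonal sign matrices together with the $I+2E_{ij}$ --- which the paper leaves implicit and you flag explicitly as the one nontrivial external input. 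Your invocation of Theorem~\ref{central-palin}(1) for the central part is exactly the content of Proposition~\ref{PINn2=1}, which the paper states (with the same computation) immediately after the proposition in question.
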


Next, we consider the palindromic IA-group.

\begin{prop}\label{PINn2=1}
$PI(\N_{n,2})=E\Pi A(\N_{n, 2}) \cap IA(\N_{n, 2})=1.$
\end{prop}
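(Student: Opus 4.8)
The plan is to show directly that $PI(\N_{n,2})=E\Pi A(\N_{n,2})\cap IA(\N_{n,2})$ is trivial. Let $\phi$ lie in this intersection. Since $\phi\in E\Pi A(\N_{n,2})$, we may write $\phi=\varphi_q$ for some $q=(q_1,\dots,q_n)\in\N_{n,2}^{\,n}$, so that $x_i^{\phi}=\overline{q_i}\,x_i\,q_i$ for each $1\le i\le n$.

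First I would pin down the $q_i$. Because $\phi\in IA(\N_{n,2})$, the corollary following \propref{p1} forces $[q]=0$; equivalently, each $q_i$ already lies in the commutator subgroup $\N_{n,2}'=\gamma_2\N_{n,2}$.

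Next I would exploit that $\N_{n,2}$ has nilpotency class $2$. On one hand $\N_{n,2}'$ is central, so $\overline{q_i}\,x_i\,q_i=\overline{q_i}\,q_i\,x_i$. On the other hand, reversal acts on $\N_{n,2}'$ as inversion: the map $w\mapsto\overline{w}^{-1}$ is the automorphism $\iota$ of $\N_{n,2}$ with $x_j\mapsto x_j^{-1}$, and $\iota$ fixes every basic commutator $[x_k,x_l]$ (since $[x_k^{-1},x_l^{-1}]=[x_k,x_l]$ in $\N_{n,2}$, as computed in the proof of \propref{Nn1=Nn2}), hence $\iota$ restricts to the identity on $\N_{n,2}'$ and $\overline{q_i}=q_i^{-1}$. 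Equivalently, one can invoke \lemref{l:1} for $k=2$ together with centrality of commutators to reorder a product. Combining the two facts, $x_i^{\phi}=q_i^{-1}q_i\,x_i=x_i$ for every $i$, whence $\phi=1$ and $PI(\N_{n,2})=1$.

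There is essentially no obstacle here: the only step that uses the structure of $\N_{n,2}$ is the identity $\overline{q_i}=q_i^{-1}$ on the commutator subgroup, for which the class-$2$ hypothesis and \lemref{l:1} are exactly what is needed; once the $IA$-condition has pushed the $q_i$ into the (central) commutator subgroup, the palindromic conjugation $\overline{q_i}\,x_i\,q_i$ collapses formally. One can also read the statement off \propref{Nn1=Nn2} directly: that isomorphism is induced by the abelianization map $\N_{n,2}\to\N_{n,1}$, i.e.\ by the map $Aut(\N_{n,2})\to{\rm GL}(n,\Z)$ of \eqnref{se1} whose kernel is $IA(\N_{n,2})$, and an isomorphism has trivial kernel, so $E\Pi A(\N_{n,2})\cap IA(\N_{n,2})=1$.
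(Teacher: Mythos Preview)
Your proof is correct and is essentially the paper's own (second) argument: use the $IA$-condition to force $q_i\in\N_{n,2}'$, then the centrality of $\N_{n,2}'$ together with $\overline{q_i}=q_i^{-1}$ (from \lemref{l:1}) collapses $\overline{q_i}\,x_i\,q_i$ to $x_i$. Your closing remark, deducing the result from \propref{Nn1=Nn2} by noting that the isomorphism there is induced by abelianization (hence has kernel $E\Pi A(\N_{n,2})\cap IA(\N_{n,2})$), is a clean alternative the paper does not spell out.
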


\begin{proof}
If $\phi \in E \Pi A(\N_{n, 2})$, then it is generated by the elementary palindromic automorphisms $\mu_{ij}$ given above. Using the normal forms for $\N_{n, 2}$ (see \cite{BG2}),  we see that $\mu_{ij}$ does not belong to $IA(\N_{n, 2})$ for all $i, j$. Hence the result follows.

Alternatively, note that $\phi \in IA(\N_{n, 2})$ implies $[\phi]=0$. So, $x_i^{\phi}=\overline{c_i} x_i c_i$ for some $c_i \in \N_{n, 2}'$. But every element in $\N_{n, 2}'$ can be written as product of $z_{ij}=[x_i, x_j]$ for $1 \leq i, j \leq n$. Noting that $\overline{z_{ij}}=z_{ij}^{-1}$ and $z_{ij}$ commute with $x_j$'s,  the result follows.
\end{proof}

\begin{remark}
The above proposition is not true in general for $\N_{n, k}$ for $k \geq 3$. For example, consider the following automorphism $\phi$ in $Aut(\N_{2, 3})$
$$
\phi : \left\{
\begin{array}{ll}
x_1 \longmapsto x_1[x_2, x_1, x_1]^2 = [x_2, x_1, x_1] x_1 \overline{[x_2, x_1, x_1]}&  \\
x_2 \longmapsto x_2.
\end{array} \right.
$$
Note that $[y, x, x]$ is a central element in $\N_{2, 3}$ and $\overline{[y, x, x]}=[y, x, x]$. Hence, $\phi$ is a non-trivial element of $PI(\N_{2, 3})$.
\end{remark}
 
\section{Elementary palindromic automorphisms of $\N_{n,3}$}

In this section, we find a generating set for $E \Pi A(\N_{n,3})$. It is evident that this generating set contains automorphism $\mu_{ij}$ for $1 \leq i \not= j \leq n$ and some central automorphisms of $\N_{n,3}$. Note that this claim is true for arbitrary free nilpotent groups $\N_{n,k}$. Hence, we need to study central palindromic automorphisms. We remark that every central palindromic automorphism is an elementary palindromic automorphism.

First, we prove the following lemma.

\begin{lemma} \label{l:2}
If $w x_1 \overline{w} x_1^{-1}\equiv 1 (\mathrm{mod} ~\gamma_{3}\N_{n,3})$,
then $w \equiv 1 (\mathrm{mod} ~ \gamma_{2}\N_{n,3})$.
\end{lemma}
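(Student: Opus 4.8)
The plan is to pass to the abelianization $\N_{n,3}/\gamma_2\N_{n,3}$, which is the free abelian group $\mathbb{Z}^n$ on the images $\bar x_1,\dots,\bar x_n$ of the basis. Since $\gamma_3\N_{n,3}\subseteq\gamma_2\N_{n,3}$, the hypothesis gives $w x_1 \overline{w} x_1^{-1}\equiv 1\pmod{\gamma_2\N_{n,3}}$, so it suffices to analyse this relation in $\mathbb{Z}^n$. The key point, which is exactly the content of the Remark preceding this lemma, is that the reversal anti-automorphism $v\mapsto\overline v$ induces the identity map on $\mathbb{Z}^n$: indeed $v\mapsto\overline{v}^{\,-1}$ is the inversion automorphism $x_i\mapsto x_i^{-1}$, which acts as $-\mathrm{id}$ on $\mathbb{Z}^n$, and inverting once more gives $\mathrm{id}$.

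Granting this, if $\mathbf a=(a_1,\dots,a_n)$ denotes the image of $w$ in $\mathbb{Z}^n$, then $w$ and $\overline w$ have the same image $\mathbf a$, so the image of $w x_1 \overline{w} x_1^{-1}$ is $\mathbf a+\mathbf e_1+\mathbf a-\mathbf e_1=2\mathbf a$. The hypothesis forces $2\mathbf a=0$, and torsion-freeness of $\mathbb{Z}^n$ yields $\mathbf a=0$, i.e. $w\equiv 1\pmod{\gamma_2\N_{n,3}}$. There is essentially no obstacle here: the only step needing a moment's thought is the triviality of reversal on the abelianization, and this is immediate from the cited Remark. I note in passing that the argument uses nothing about step $3$ beyond $\gamma_3\subseteq\gamma_2$, so the same statement holds for $\N_{n,k}$ with $k\ge 2$, and with $\gamma_3$ replaced by any $\gamma_s$, $s\ge 2$.
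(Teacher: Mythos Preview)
Your proof is correct and is essentially the same argument as the paper's: both reduce the hypothesis modulo $\gamma_2$, use that reversal is trivial on the abelianization (the paper writes this as $w_1\equiv\overline{w_1}\pmod{\gamma_2}$ for a chosen representative $w_1=x_1^{\alpha_1}\cdots x_n^{\alpha_n}$), obtain $w^2\equiv 1\pmod{\gamma_2}$, and conclude by torsion-freeness. Your phrasing via $\mathbb{Z}^n$ is slightly more streamlined, and your closing observation that nothing about step $3$ is actually used is a fair one.
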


\begin{proof}
Write the word $w$ as a product
$$
w=w_1 w_2,
$$
where $w_1 = x_1^{\alpha_1} x_2^{\alpha_2} \ldots x_n^{\alpha_n}$, $\alpha_i \in \mathbb{Z}$ and $w_2 \in \gamma_{2} \N_{n,3}$.
Then
$$
w_1 w_2 x_1 \overline{w_2}~ \overline{w_1} x_1^{-1}\equiv 1 \, (\mathrm{mod} ~ \gamma_{3}\N_{n,3}).
$$
In particular,
$$
w_1 x_1 \overline{w_1} x_1^{-1}\equiv 1 \, (\mathrm{mod} ~\gamma_{2}\N_{n,3}).
$$
In other words,
$$
w_1 \overline{w_1} \equiv 1 \, (\mathrm{mod} ~\gamma_{2}\N_{n,3}).
$$
Since $w_1 \equiv \overline{w_1} \, (\mathrm{mod} ~\gamma_{2}\N_{n,3})$, it follows that $w_1^2 \equiv 1 \, (\mathrm{mod} ~\gamma_{2}\N_{n,3})$, that is,
$w_1 \equiv 1 \, (\mathrm{mod}~ \gamma_{2}\N_{n,3})$.
\end{proof}

Note that a central palindromic automorphism $\varphi$ acts on the generators  $x_1,\ldots,x_n$ by the rule
$$
x_i^{\varphi}=w_i x_i \overline{w_i},~\textrm{where}~w_i \in \N_{n,3}~\textrm{and}~~ 1 \leq i\leq n.
$$
Then by \lemref{l:2}, we can assume that $w_i=u_i v_i$, where $u_i$ is a product of commutators of weight 2 and  $v_i \in  \gamma_3 N_{n,3}$. By \lemref{l:1}, $\overline{v_i}=v_i$ and hence
$$
x_i^{\varphi}=u_i x_i \overline{u_i} v_i^2~\textrm{for}~ 1 \leq i \leq n.
$$

Next, we prove the following lemma.

\begin{lemma} \label{l:3}
If $x \in \N_{n,3}$ and
$$
 u= \prod\limits_{1\leq b < a \leq n} [x_a, x_b]^{p_{ab}},~\textrm{where}~ p_{ab}\in \mathbb{Z},
$$
then
$$
 u x \overline{u}=
        x \prod\limits_{1\leq b < a \leq n}
        \left([x_a, x_b,x][x_a, x_b,x_b] [x_a,x_b,x_a]\right)^{p_{ab}}.
$$
\end{lemma}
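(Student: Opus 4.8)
The plan is to deal with the factors of $u$ one at a time. Write $u=[x_{a_1},x_{b_1}]^{p_1}\cdots[x_{a_m},x_{b_m}]^{p_m}$, listing the factors of $\prod_{1\le b<a\le n}[x_a,x_b]^{p_{ab}}$ in the order in which they occur. Since reversing a word reverses the order of its letters, we have $\overline{vw}=\overline{w}\,\overline{v}$ and $\overline{w^{p}}=\overline{w}^{p}$ for all $p\in\mathbb Z$, so
$$
u\,x\,\overline u=[x_{a_1},x_{b_1}]^{p_1}\big([x_{a_2},x_{b_2}]^{p_2}\big(\cdots\big)\overline{[x_{a_2},x_{b_2}]}^{p_2}\big)\overline{[x_{a_1},x_{b_1}]}^{p_1},
$$
where the innermost bracket is $[x_{a_m},x_{b_m}]^{p_m}\,x\,\overline{[x_{a_m},x_{b_m}]}^{p_m}$. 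Everything therefore reduces to computing $z^{p}\,y\,\overline{z}^{p}$ for one commutator $z=[x_a,x_b]$, an arbitrary $y\in\N_{n,3}$, and $p\in\mathbb Z$.

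The first ingredient is the reversal identity $\overline{[x_a,x_b]}=[x_a,x_b]^{-1}[x_a,x_b,x_b][x_a,x_b,x_a]$ in $\N_{n,3}$. Reversing the word $[x_a,x_b]=x_a^{-1}x_b^{-1}x_ax_b$ gives $\overline{[x_a,x_b]}=x_bx_ax_b^{-1}x_a^{-1}=[x_b^{-1},x_a^{-1}]$, and applying the identity $[u^{-1},v]=[v,u]^{u^{-1}}$ twice rewrites this as $[x_b,x_a]^{x_a^{-1}x_b^{-1}}=z^{-1}[z^{-1},x_a^{-1}x_b^{-1}]$ where $z=[x_a,x_b]$. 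Because $z\in\gamma_2\N_{n,3}$, the commutator $[z^{-1},x_a^{-1}x_b^{-1}]$ lies in $\gamma_3\N_{n,3}=Z(\N_{n,3})$, and expanding it through the bilinearity of the commutator pairing $\gamma_2\N_{n,3}\times\N_{n,3}\to\gamma_3\N_{n,3}$ (legitimate since $\gamma_4\N_{n,3}=1$) produces $[x_a,x_b,x_b][x_a,x_b,x_a]$. (This agrees with the $k=1$ case of \propref{l:4}, which I would not invoke here since its own proof appeals to this lemma.)

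The second ingredient is the conjugation formula $z\,y\,z^{-1}=y\,[z,y]=y\,[x_a,x_b,y]$, which again holds because $[z,y]\in\gamma_3\N_{n,3}$ is central and $y\mapsto[z,y]$ is a homomorphism into $\gamma_3\N_{n,3}$. Writing $z\,y\,\overline z=(z\,y\,z^{-1})(z\,\overline z)$ and combining the two ingredients yields
$$
z\,y\,\overline z=y\cdot[x_a,x_b,y][x_a,x_b,x_b][x_a,x_b,x_a],
$$
so that the correction $c_{ab}(y):=[x_a,x_b,y][x_a,x_b,x_b][x_a,x_b,x_a]$ is central. Since $[x_a,x_b,\gamma_2\N_{n,3}]\subseteq\gamma_4\N_{n,3}=1$, the factor $c_{ab}(y)$ depends only on $y$ modulo $\gamma_2\N_{n,3}$; iterating the one-factor identity and using that $c_{ab}(y)$ is central then gives $z^{p}\,y\,\overline{z}^{p}=y\,c_{ab}(y)^{p}$ for all $p\in\mathbb Z$ (the negative powers coming from solving the $p=1$ identity for $y$, noting $\overline{z}^{-1}\in\gamma_2\N_{n,3}$ by the Remark above).

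Finally I would reassemble the nested expression from the inside out. The innermost bracket equals $x\,C_m$ with $C_m:=c_{a_mb_m}(x)^{p_m}\in Z(\N_{n,3})$; the next one equals $\big([x_{a_{m-1}},x_{b_{m-1}}]^{p_{m-1}}\,x\,\overline{[x_{a_{m-1}},x_{b_{m-1}}]}^{p_{m-1}}\big)C_m=x\,C_{m-1}C_m$ because $C_m$ is central; and so on, giving $u\,x\,\overline u=x\,C_1C_2\cdots C_m$. As all $C_j$ are central they may be reordered and collected, which produces exactly $x\prod_{1\le b<a\le n}\big([x_a,x_b,x][x_a,x_b,x_b][x_a,x_b,x_a]\big)^{p_{ab}}$. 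The only genuine computation here is the reversal identity of the second paragraph; everything after it is just sliding central elements past one another, which is painless precisely because $\gamma_3\N_{n,3}=Z(\N_{n,3})$. The single thing to be careful about — and the closest thing to an obstacle — is the left-normed commutator convention and the signs when rewriting $[x_b^{-1},x_a^{-1}]$.
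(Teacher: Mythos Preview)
Your proof is correct and rests on the same decomposition as the paper's: writing $u\,x\,\overline u = (u\,x\,u^{-1})(u\,\overline u) = x\,[u,x]\,(u\,\overline u)$ and then computing $[u,x]$ and $u\,\overline u$ as central products of triple commutators. The only difference is organizational: the paper applies this decomposition once to the full product $u$ and expands $[u,x]=\prod[x_a,x_b,x]^{p_{ab}}$ and $u\,\overline u=\prod[x_a,x_b,x_bx_a]^{p_{ab}}$ directly, whereas you peel off one factor $z=[x_a,x_b]$ at a time, establish $z^{p}y\,\overline z^{\,p}=y\,c_{ab}(y)^{p}$, and then reassemble using centrality. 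The key commutator computation---your reversal identity $\overline z = z^{-1}[x_a,x_b,x_b][x_a,x_b,x_a]$ and the paper's $z\,\overline z=[x_a,x_b,x_bx_a]$---is literally the same calculation.
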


\begin{proof}
We have
$$
 u x \overline{u}=xx^{-1}uxu^{-1}u\overline{u}=
 x[x,u^{-1}]u\overline{u}=
  x[u,x]u\overline{u}.
$$
Note that $[u,x]\in  \gamma_3 \N_{n,3}$ and $u\overline{u}\in  \gamma_3 \N_{n,3}$. Since
$$
 u= \prod\limits_{1\leq b < a \leq n} [x_a, x_b]^{p_{ab}}~\textrm{where}~ p_{ab}\in \mathbb{Z},
$$
it follows that
$$
 [u,x]= \prod\limits_{1\leq b < a \leq n} [x_a, x_b,x]^{p_{ab}}
$$
and
$$
 u\overline{u}= \prod\limits_{1\leq b < a \leq n} \left([x_a, x_b] [x_b^{-1},x_a^{-1}]\right)^{p_{ab}}.
$$
Further,
\begin{eqnarray}
 [x_a, x_b] [x_b^{-1},x_a^{-1}] & = & [x_a, x_b](x_bx_a)[x_b,x_a](x_bx_a)^{-1} \nonumber\\
& = & [x_a, x_b](x_bx_a)[x_a,x_b]^{-1}(x_bx_a)^{-1} \nonumber\\
& = & [[x_a,x_b]^{-1},(x_bx_a)^{-1}]\nonumber\\
& = & [x_a,x_b,x_b x_a] \nonumber.
\end{eqnarray}

Hence,
$$
 u\overline{u}= \prod\limits_{1\leq b < a \leq n} \left([x_a, x_b,x_b] [x_a,x_b,x_a]\right)^{p_{ab}}.
$$
In this way, we have
$$
 u x \overline{u}=
        x \prod\limits_{1\leq b < a \leq n}
        \left([x_a, x_b,x][x_a, x_b,x_b] [x_a,x_b,x_a]\right)^{p_{ab}}.
        $$
\end{proof}
\medskip

Now, if
$$
 u_i= \prod\limits_{1\leq b < a \leq n} [x_a, x_b]^{p_{ab,i}},~\textrm{where}~ p_{ab,i}\in \mathbb{Z}~\textrm{and}~ 1 \leq i \leq n,
$$
then
\begin{eqnarray}
 x_i^{\varphi} & = & (u_i x_i \overline{u_i}) v_i^2 \nonumber\\
& = & x_i \left( \prod\limits_{1\leq b < a \leq n} \left([x_a, x_b,x_i][x_a, x_b,x_b] [x_a,x_b,x_a]\right)^{p_{ab,i}} \right) v_i^2 \nonumber
\end{eqnarray}
for $1 \leq i  \leq n$. Since $\varphi$ acts independently on each letter  $x_i$ and all the letters are equivalent, it is enough  to understand what is the action of $\varphi$ on the letter  $x_n$. From the above formulas it follows that  $\varphi$ is a product of automorphisms of the type
$$
 x_n^{\varphi_{ab}}= x_n [x_a, x_b,x_n][x_a, x_b,x_b] [x_a,x_b,x_a]
$$
and
$$
 x_n^{\varphi_{abc}}= x_n [x_a, x_b,x_c]^2.
$$
We can assume that  $c\geq b$ and $a > b$. Hence, these formulas contains only basis commutators \cite[Chapter 5]{Magnus}.

Note that the words
$$
 w_{ab} =  [x_a, x_b,x_n][x_a, x_b,x_b] [x_a,x_b,x_a],~\textrm{where}~ 1\leq b < a \leq n
$$
are independent in  $\gamma_3 \N_{n,3}$ and the number of these words is equal to  $n(n-1)/2$. In other words, the number of these words is equal to the dimension of the quotient  $\gamma_2 \N_{n,3} /\gamma_3 \N_{n,3}$.
Define the subgroup
$$
H=\left\langle w_{ab},\,\, [x_a, x_b,x_c]^2 ~|~  1\leq b < a \leq n~\textrm{and}~ 1 \leq b \leq c \leq n   \right \rangle.
$$
Then we have the following isomorphism
$$
\gamma_3 \N_{n,3} / H \simeq \mathbb{Z}_2^q,
$$
where $q=\dim (\gamma_3 \N_{n,3}) - \dim (\gamma_2 \N_{n,3}/\gamma_3 \N_{n,3})=
\displaystyle\frac{n(n^2-1)}{3}-\frac{n(n-1)}{2}$.

Thus, we have proved the following.

\begin{prop}\label{cgen}
The group of central palindromic automorphisms of $\N_{n,3}$ is generated by the automorphisms  $\varphi_{ab,i}$ and $\varphi_{abc,i}$, where
$1\leq b < a \leq n$, $1 \leq b \leq c \leq n$ and $1 \leq i \leq n$. Further, these act on the generators  $x_1,\ldots, x_n$ in the following manner
$$
\varphi_{ab,i} : \left\{
\begin{array}{l}
x_i \longmapsto x_i [x_a, x_b,x_i][x_a, x_b,x_b] [x_a,x_b,x_a] \\
x_j \longmapsto x_j  ~\mbox{for}~j \not= i
\end{array} \right.
$$
and
$$
\varphi_{abc,i} : \left\{
\begin{array}{l}
x_i \longmapsto x_i [x_a, x_b,x_c]^2 \\
x_j \longmapsto x_j  ~\mbox{for}~j \not= i.
\end{array} \right.
$$

The quotient of the group of central automorphisms by the subgroup of central palindromic automorphisms is isomorphic to the group $\mathbb{Z}_2^{nq}$, where $$q=\dim (\gamma_3 \N_{n,3}) - \dim (\gamma_2 \N_{n,3}/\gamma_3 \N_{n,3})=
\displaystyle\frac{n(n^2-1)}{3} - \frac{n(n-1)}{2}.$$
\end{prop}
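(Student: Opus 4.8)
The plan is to read off the first assertion directly from \lemref{l:2} and \lemref{l:3}, and to reduce the second assertion to one linear-algebra computation over $\mathbb{F}_2$. For the generators: if $\varphi$ is a central palindromic automorphism then, by \lemref{l:2}, we may take $x_i^{\varphi}=u_i x_i \overline{u_i}\, v_i^2$ with $u_i$ a product of weight-$2$ commutators and $v_i\in\gamma_3\N_{n,3}$, and \lemref{l:3} rewrites $u_i x_i\overline{u_i}=x_i\prod_{b<a}w_{ab,i}^{\,p_{ab,i}}$. I would then expand $v_i$ in the basis of weight-$3$ basic commutators and use that $\gamma_3\N_{n,3}=Z(\N_{n,3})$ is abelian to write $v_i^2=\prod\bigl([x_a,x_b,x_c]^2\bigr)^{m_{abc,i}}$; together with the easy remark that all central automorphisms of $\N_{n,3}$ commute --- in fact each fixes $\gamma_2\N_{n,3}$ pointwise --- this exhibits $\varphi$ as a product of the $\varphi_{ab,i}$ and $\varphi_{abc,i}$, and the reverse inclusion is immediate.

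For the quotient I would first identify the group $A$ of all central automorphisms of $\N_{n,3}$. Since $Z(\N_{n,3})=\gamma_3\N_{n,3}$ and $\N_{n,3}$ is free nilpotent of class $3$, the maps $x_i\mapsto x_i c_i$ with $c_i\in\gamma_3\N_{n,3}$ arbitrary are precisely the central automorphisms (each is invertible, with inverse $x_i\mapsto x_i c_i^{-1}$), and by the commuting remark composition is coordinatewise; hence $A\cong(\gamma_3\N_{n,3})^n\cong\Z^{dn}$, where $d=\dim\gamma_3\N_{n,3}=\tfrac{n(n^2-1)}{3}$ is the number of weight-$3$ basic commutators. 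Under this identification the subgroup $P$ of central palindromic automorphisms becomes $H^n$, where $H\le\gamma_3\N_{n,3}$ is the subgroup generated by the $w_{ab}$ and the squares $[x_a,x_b,x_c]^2$; so $A/P\cong(\gamma_3\N_{n,3}/H)^n$.

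The remaining task, and the only real computation, is $\gamma_3\N_{n,3}/H\cong\mathbb{Z}_2^{q}$ with $q=d-\binom{n}{2}$. Since the squares $[x_a,x_b,x_c]^2$ appearing in $H$ run over all weight-$3$ basic commutators, they generate $2\,\gamma_3\N_{n,3}$, so $2\,\gamma_3\N_{n,3}\subseteq H$; it therefore suffices to work in $V:=\gamma_3\N_{n,3}/2\,\gamma_3\N_{n,3}\cong\mathbb{F}_2^{d}$ and to compute the $\mathbb{F}_2$-span $\overline H$ of the images $\overline{w_{ab}}=\overline{[x_a,x_b,x_n]}+\overline{[x_a,x_b,x_b]}+\overline{[x_a,x_b,x_a]}$, $1\le b<a\le n$. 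The key point is that these $\binom{n}{2}$ vectors are $\mathbb{F}_2$-linearly independent: the basic commutator $[x_a,x_b,x_b]$ occurs with coefficient $1$ in $\overline{w_{ab}}$ (it is not $[x_a,x_b,x_a]$ since $a\ne b$, nor $[x_a,x_b,x_n]$ since $b<n$) and occurs in no $\overline{w_{a'b'}}$ with $(a',b')\ne(a,b)$, since $w_{a'b'}$ involves only basic commutators whose first two entries are $x_{a'},x_{b'}$. Hence $\dim_{\mathbb{F}_2}\overline H=\binom{n}{2}=\dim(\gamma_2\N_{n,3}/\gamma_3\N_{n,3})$, so $\gamma_3\N_{n,3}/H\cong V/\overline H\cong\mathbb{F}_2^{\,d-\binom{n}{2}}=\mathbb{Z}_2^{\,q}$ with $q=\tfrac{n(n^2-1)}{3}-\tfrac{n(n-1)}{2}$, giving $A/P\cong\mathbb{Z}_2^{nq}$. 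I expect the main obstacle to be bookkeeping rather than conceptual: verifying that the coordinate slice of $P$ is exactly $H$, that the squares $[x_a,x_b,x_c]^2$ exhaust $2\,\gamma_3\N_{n,3}$, and the linear-independence argument for the $\overline{w_{ab}}$.
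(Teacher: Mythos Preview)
Your approach is essentially the paper's own: reduce to one coordinate via Lemmas~\ref{l:2} and~\ref{l:3}, identify the image with the subgroup $H=\langle w_{ab},\,[x_a,x_b,x_c]^2\rangle$ of $\gamma_3\N_{n,3}$, and compute the quotient. Your $\mathbb{F}_2$-linear-independence argument for the $\overline{w_{ab}}$ (pivoting on $[x_a,x_b,x_b]$) is a clean way to justify the step the paper only asserts.

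One caution on the bookkeeping you flag: the $i$-th coordinate slice of $P$ is not literally $H$ but rather $H_i=\langle w_{ab,i},\,[x_a,x_b,x_c]^2\rangle$ with $w_{ab,i}=[x_a,x_b,x_i][x_a,x_b,x_b][x_a,x_b,x_a]$, and these subgroups are genuinely different for different $i$ (e.g.\ for $n=3$ one checks $H_1\neq H_3$ already in $\gamma_3/2\gamma_3$). So $P=\prod_i H_i$, not $H^n$. The paper handles this by the remark that ``all the letters are equivalent'': the $S_n$-action permutes the $H_i$, so each quotient $\gamma_3\N_{n,3}/H_i$ is isomorphic to $\gamma_3\N_{n,3}/H_n$, and $A/P\cong\prod_i(\gamma_3\N_{n,3}/H_i)\cong(\mathbb{Z}_2^{q})^n$ follows. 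Your pivot argument is tailored to $i=n$ (it uses $b<n$), so invoke the symmetry explicitly rather than trying to show the slices coincide.
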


From this follows the main result of this section.

\begin{theorem}\label{Nn3-gen-set}
The group $E \Pi A(\N_{n,3})$ is generated by automorphisms $\mu_{ij}$, where $1 \leq i \not= j \leq n$ and by central automorphisms
$\varphi_{ab,i}$ and $\varphi_{abc,i}$, where $1\leq b < a \leq n$, $1 \leq b \leq c \leq n$ and $1 \leq i \leq n$.
\end{theorem}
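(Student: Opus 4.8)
The strategy is to reduce modulo $\gamma_3\N_{n,3}$ to the already-understood group $E\Pi A(\N_{n,2})$, and to recognise the resulting correction term as a central palindromic automorphism, whose structure is given by \propref{cgen}.

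First I would show that the canonical projection $\rho\colon\N_{n,3}\to \N_{n,3}/\gamma_3\N_{n,3}=\N_{n,2}$ induces a group homomorphism $\rho_*\colon E\Pi A(\N_{n,3})\to E\Pi A(\N_{n,2})$. An elementary palindromic automorphism $\phi\colon x_i\mapsto \overline{q_i}\,x_i\,q_i$ of $\N_{n,3}$ induces the endomorphism $x_i\mapsto \overline{\rho(q_i)}\,x_i\,\rho(q_i)$ of $\N_{n,2}$; since $\gamma_3\N_{n,3}$ is characteristic this induced map is again an automorphism, it is visibly elementary palindromic, and (using \thmref{group}, which tells us both sides are genuine groups) the assignment $\phi\mapsto\bar\phi$ is a homomorphism $\rho_*$ satisfying $\rho_*(\mu_{ij})=\mu_{ij}$.

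Next, by \propref{Nn2-gen-set} together with the decomposition $\Pi A(\N_{n,2})=E\Pi A(\N_{n,2})\leftthreetimes\Omega S_n$, the group $E\Pi A(\N_{n,2})$ is generated by the automorphisms $\mu_{ij}$. So, given an arbitrary $\phi\in E\Pi A(\N_{n,3})$, I would write $\rho_*(\phi)=\mu_{i_1 j_1}^{\varepsilon_1}\cdots\mu_{i_m j_m}^{\varepsilon_m}$ in $E\Pi A(\N_{n,2})$ and set $g=\mu_{i_1 j_1}^{\varepsilon_1}\cdots\mu_{i_m j_m}^{\varepsilon_m}\in E\Pi A(\N_{n,3})$, the same word evaluated in $\N_{n,3}$ (this makes sense because $E\Pi A(\N_{n,3})$ is a group). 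Then $\rho_*(g)=\rho_*(\phi)$, so $\psi:=\phi g^{-1}$ lies in $\ker\rho_*$.

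Finally I would identify $\ker\rho_*$. An element of $\ker\rho_*$ is an elementary palindromic automorphism of $\N_{n,3}$ that acts trivially modulo $\gamma_3\N_{n,3}$; since $\gamma_3\N_{n,3}=Z(\N_{n,3})$, this is exactly a central palindromic automorphism of $\N_{n,3}$, and conversely every central palindromic automorphism lies in $\ker\rho_*$. Hence \propref{cgen} gives $\ker\rho_*=\langle\varphi_{ab,i},\varphi_{abc,i}\rangle$, so $\psi$ is a product of the $\varphi_{ab,i}$ and $\varphi_{abc,i}$, and therefore $\phi=\psi g$ lies in the subgroup generated by the $\mu_{ij}$, the $\varphi_{ab,i}$, and the $\varphi_{abc,i}$; since $\phi$ was arbitrary, these automorphisms generate $E\Pi A(\N_{n,3})$. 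The substantive content has already been extracted in \propref{cgen} and \thmref{group}; the only points that really need care here are the well-definedness of $\rho_*$ as a homomorphism of groups and the identification $\gamma_3\N_{n,3}=Z(\N_{n,3})$ that makes $\ker\rho_*$ precisely the central palindromic automorphisms, so I do not anticipate a serious obstacle beyond this bookkeeping.
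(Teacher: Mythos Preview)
Your argument is correct and is essentially the paper's approach made explicit: the paper simply asserts at the start of Section~4 that it is ``evident'' that $E\Pi A(\N_{n,3})$ is generated by the $\mu_{ij}$ together with central palindromic automorphisms, and then deduces the theorem immediately from \propref{cgen}. Your construction of the homomorphism $\rho_*$ and identification of its kernel is exactly what justifies that ``evident'' step, so there is no substantive difference.
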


\section{Tame palindromic automorphisms of $\N_{n,3}$}
Recall that an automorphism of  $\N_{n,k}=F_n/ \gamma_{k+1} F_n$ is called {\it tame} if it is induced by some automorphism of free group  $F_n$. In the opposite case, it is called a \emph{wild} automorphism.

In the paper \cite{Bry}, a necessary condition for a central automorphism of a free nilpotent group  $\N_{n,k}$ with $k,n \geq 2$ to be a tame automorphism was determined. The purpose of this section is to describe central palindromic automorphisms of free nilpotent groups $\N_{n,3}$ for which this necessary condition holds.

To formulate the necessary condition, we recall the definition of Fox's derivatives. See \cite{Fox} or \cite[Chapter 3]{Bir} for details. Let  $\mathbb{Z}F_n$ be the integral group ring of the free group $F_n$. The $j$-th Fox derivative is a map
$$
\partial_j : \mathbb{Z}F_n \to \mathbb{Z}F_n
$$
defined on the generators $x_1, \ldots, x_n$ by the rule
$$
\partial_j(x_i)=
\left\{
\begin{array}{cc}
  1 & i=j \\
  0 & i\neq j. \\
\end{array}
\right.
$$
Each $\partial_j$ is a $\mathbb{Z}$-linear map and the following condition holds
$$
\partial_j(uv)=\partial_j(u)+u\partial_j(v)
$$
for all $u,v \in F_n$.
From this, it follows that
$$
\partial_j(u^{-1})=-u^{-1}\partial_j(u)~\textrm{for all}~ u \in F_n.
$$

The ring $\mathbb{Z}F_n$ has a fundamental ideal $\Delta$ called the augmentation ideal given by
$$
\Delta= \mathrm{Ker} \left( \varepsilon : \mathbb{Z}F_n \rightarrow \mathbb{Z}\right).
$$
Here $\varepsilon$ is a ring homomorphism defined on the generators $x_1, \ldots, x_n$ as
$$
\varepsilon(x_i)= 1~\textrm{for all}~1 \leq i \leq n.
$$
From the evident relations
$$
uv-1=u(v-1)+(u-1)
$$
and
$$[u,v]-1=u^{-1}v^{-1}((u-1)(v-1)-(v-1)(u-1))$$
where $u,v \in F_n$ and $[u,v]=u^{-1}v^{-1}uv$, it follows that the ideal  $\Delta$ is generated by elements $x_1-1, \ldots, x_n-1$. Further, it follows that $w-1 \in \Delta^{k}$ for all  $w \in \gamma_{k} F_n$. It also follows that $[\Delta,\Delta]$ is an ideal of $\mathbb{Z}F_n$ and is generated by ring commutators $ab-ba$ for $a,b \in \Delta$. Some other properties of  $\Delta$ and connections with Fox's derivatives one can found in \cite{Gup, Pas}.

A necessary condition for a central automorphism of a free nilpotent group  $\N_{n,k}$ to be a tame automorphism is given by the following theorem of Bryant-Gupta-Levin-Mochizuki \cite{Bry}.

\begin{theorem}  \cite{Bry}.\label{t3.1}
 Let $n$ and $k$ be positive integers, where $k\geq 2$. Let $w_1, \ldots, w_n$ be elements of $\gamma_{k} \N_{n,k}$
and $\varphi$ be the automorphism of $\N_{n,k}$ satisfying $x_i^{\varphi}=x_i w_i$ for each $1 \leq i \leq n$.
Let $u_1, \ldots, u_n$ be elements of $\gamma_{k} F_n$ such that $u_i \gamma_{k+1} F_n=w_i$.
If $\varphi$ is tame, then
$$
\partial_1 u_1 + \cdots + \partial_n u_n \in \left( \Delta^{k-1}\cap [\Delta,\Delta]\right) +\Delta^k.
$$
\end{theorem}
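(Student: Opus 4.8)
Since the statement is the theorem of Bryant--Gupta--Levin--Mochizuki cited from \cite{Bry}, what follows is a plan of how one would prove it rather than a self-contained argument. The idea is to pull $\varphi$ back to the free group, rephrase the hypothesis ``tame'' as a statement about the Fox Jacobian of an automorphism of $F_n$, and then extract the claimed membership by computing a determinant modulo $[\Delta,\Delta]+\Delta^{k}$. First I would fix and normalize a lift: assuming $\varphi$ is tame, choose $\Phi\in Aut(F_n)$ inducing it. Since $w_i\in\gamma_{k}\N_{n,k}$ and $\N_{n,k}=F_n/\gamma_{k+1}F_n$, the element $x_i^{-1}x_i^{\Phi}$ lies in $\gamma_{k}F_n$ and reduces to $w_i$; so I may take this for $u_i$. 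Moreover, lying in $(\Delta^{k-1}\cap[\Delta,\Delta])+\Delta^{k}$ does not depend on which lift $u_i\in\gamma_{k}F_n$ of $w_i$ is chosen: replacing $u_i$ by $u_ic_i$ with $c_i\in\gamma_{k+1}F_n$ changes $\partial_i u_i$ by $u_i\,\partial_i c_i$, and $\partial_i c_i\in\Delta^{k}$ because $c_i-1\in\Delta^{k+1}$ and $\Delta^{k}$ is a two-sided ideal. Hence I may assume $x_i^{\Phi}=x_i u_i$ with $u_i\in\gamma_{k}F_n$, and work with precisely these $u_i$.

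Next I would form the Fox Jacobian $J(\Phi)=\big(\partial_j x_i^{\Phi}\big)_{1\le i,j\le n}\in M_n(\Z F_n)$. From $x_i^{\Phi}=x_i u_i$ and the product rule for $\partial_j$ one gets $J(\Phi)=I+B$ with $B_{ij}=x_i\,\partial_j u_i$. The auxiliary fact that $w\in\gamma_{m}F_n$ implies $\partial_j w\in\Delta^{m-1}$ --- proved by induction on $m$ from $\partial_j[a,b]=a^{-1}(b^{-1}-1)\partial_j a+a^{-1}b^{-1}(a-1)\partial_j b$ together with $a-1\in\Delta^{m}$ for $a\in\gamma_{m}F_n$ --- then shows all entries of $B$ lie in $\Delta^{k-1}$. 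Using the Fox chain rule and the existence of $\Phi^{-1}$ one checks $J(\Phi)$ is invertible over $\Z F_n$. Now apply the abelianization homomorphism $p:\Z F_n\to\Z[F_n^{\mathrm{ab}}]=\Z[t_1^{\pm1},\dots,t_n^{\pm1}]=:\Lambda$; by the identity $[u,v]-1=u^{-1}v^{-1}\big((u-1)(v-1)-(v-1)(u-1)\big)$ its kernel is the two-sided ideal $[\Delta,\Delta]$, so, with $\bar\Delta=p(\Delta)=(t_1-1,\dots,t_n-1)$, one has $p^{-1}(\bar\Delta^{k})=[\Delta,\Delta]+\Delta^{k}$. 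Since also $\sum_i\partial_i u_i\in\Delta^{k-1}$, the conclusion of the theorem is equivalent to $p\big(\sum_i\partial_i u_i\big)\in\bar\Delta^{k}$.

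The remaining step is to control $\det p(J(\Phi))$. Here $p(J(\Phi))=I+p(B)$ is invertible over the commutative ring $\Lambda$ with $p(B)$ supported in $\bar\Delta^{k-1}$; since $2(k-1)\ge k$ for $k\ge2$, every minor of $p(B)$ of size $\ge2$ lies in $\bar\Delta^{k}$, so $\det p(J(\Phi))\equiv 1+\mathrm{tr}\,p(B)\equiv 1+\sum_i p(\partial_i u_i)\pmod{\bar\Delta^{k}}$ (the correction $\sum_i(t_i-1)p(\partial_i u_i)$ again lands in $\bar\Delta^{k}$). On the other hand $\det p(J(\Phi))$ is a unit of $\Lambda$, hence $\pm$ a monomial, and its augmentation equals $\det I=1$ because $\varphi$, being central with $k\ge2$, induces the identity on $F_n^{\mathrm{ab}}$. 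So $\det p(J(\Phi))=t^{\alpha}$ for some $\alpha\in\Z^{n}$, and $p\big(\sum_i\partial_i u_i\big)\equiv t^{\alpha}-1\pmod{\bar\Delta^{k}}$.

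The crux --- and the step I expect to be the real obstacle --- is then to show that, under the hypotheses, $\alpha=0$, i.e.\ that the abelianized Jacobian of a free-group lift of a central automorphism of $\N_{n,k}$ with $k\ge2$ has trivial degree; this forces $t^{\alpha}-1\in\bar\Delta^{k}$ and hence $\sum_i\partial_i u_i\in(\Delta^{k-1}\cap[\Delta,\Delta])+\Delta^{k}$, as claimed. This is where the fact that $\Phi$ is an automorphism of the \emph{free} group, and not merely an $IA$-endomorphism, is used decisively, and it is the technical heart of \cite{Bry}: one tracks how $\det p(J(\cdot))$ --- equivalently, the degree-$(k-1)$ part of the Magnus expansion of $x_i\mapsto x_i u_i$ --- behaves across a generating set of $Aut(F_n)$ and across $IA(F_n)$, and combines this with the constraint $u_i\in\gamma_{k}F_n$ forced by centrality. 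The surrounding manipulations (the product and chain rules, the dimension-subgroup fact $w-1\in\Delta^{m}\Leftrightarrow w\in\gamma_{m}F_n$, and the trace/determinant identity modulo $\bar\Delta^{k}$) are routine; it is this last bookkeeping, and the exact identification of $(\Delta^{k-1}\cap[\Delta,\Delta])+\Delta^{k}$ as the obstruction space, that carries the weight.
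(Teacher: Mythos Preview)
The paper does not prove this theorem; it is quoted from \cite{Bry} and used as a black box (and only in the case $k=3$). So there is no ``paper's own proof'' against which to compare your attempt.

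That said, your outline has the right shape, and for $k\ge 3$ it is closer to complete than you suggest. Your ``crux'' --- that $\alpha=0$ --- actually follows immediately from what you already have and is not the technical heart of anything. Since every entry of $p(B)$ lies in $\bar\Delta^{k-1}$, the unit $\det p(J(\Phi))=t^{\alpha}$ satisfies $t^{\alpha}-1\in\bar\Delta^{k-1}$; for $k\ge3$ this gives $t^{\alpha}-1\in\bar\Delta^{2}$, and since the image of $t^{\alpha}-1$ in $\Lambda/\bar\Delta^{2}\cong\mathbb Z\oplus\mathbb Z^{n}$ is $\sum_i\alpha_i(t_i-1)$, one gets $\alpha=0$ at once. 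No bookkeeping over generators of $Aut(F_n)$ or $IA(F_n)$ is required.

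For $k=2$, however, your crux step is genuinely false, and this exposes a problem with the statement as recorded here. Take $\Phi\in Aut(F_n)$ to be conjugation by $x_1$: then $x_i^{\Phi}=x_iu_i$ with $u_i=[x_i,x_1]\in\gamma_2F_n$, the induced $\varphi$ on $\N_{n,2}$ is inner and hence tame, yet a direct computation gives $\det p(J(\Phi))=t_1^{\,1-n}$, so $\alpha\ne0$ and $\sum_i\partial_iu_i\equiv (1-n)(x_1-1)\not\equiv 0\pmod{[\Delta,\Delta]+\Delta^2}$. Since for $k=2$ one has $(\Delta^{k-1}\cap[\Delta,\Delta])+\Delta^k=[\Delta,\Delta]+\Delta^2$, the stated conclusion fails. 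The paper only invokes the theorem for $k=3$, so nothing downstream is affected, but you should be aware that the $k=2$ case as written cannot be established by your method --- and apparently not at all.
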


It can be seen that, if a central automorphism $\varphi$ of $\N_{n,3}$ given by
$$
x_i^{\varphi}=x_i w_i~\textrm{for}~w_i \in \gamma_{3} \N_{n,3}~\textrm{and}~1 \leq i \leq n
$$
is tame, then
$$
\partial_1 w_1+ \cdots +\partial_n w_n \equiv 0 \left( \mathrm{mod} ~ R \right),
\eqno{(1)}
$$
where $R=[\Delta,\Delta]+\Delta^3$. This follows from the fact that $\Delta^2 \subseteq [\Delta,\Delta]$. Since the ideal  $[\Delta,\Delta]$ is generated by the ring commutators $\big((u-1)(v-1)-(v-1)(u-1)\big)=uv-vu$, it follows that the quotient ring  $\mathbb{Z}F_n/R$
is commutative. Moreover, from the equality
$$
w(u-1)(v-1)=(u-1)(v-1)+(w-1)(u-1)(v-1),
$$
we get
$$
x_i(x_j-1)(x_l-1)\equiv (x_j-1)(x_l-1) \left( \mathrm{mod}~ R \right),~\textrm{where}~ i,j,l \in \{ 1, \ldots, n \}.
$$

From the above properties of the quotient ring  $\mathbb{Z}F_n/R$ and Fox's derivatives, we obtain
$$
\begin{array}{l}
  \partial_i [x_a,x_b,x_c]\equiv0, \\
  \partial_i [x_a,x_b,x_i]\equiv 0, \\
  \partial_i [x_i,x_a,x_b]\equiv (x_a-1)(x_b-1), \\
  \partial_i [x_a,x_i,x_b]\equiv -(x_a-1)(x_b-1), \\
  \partial_i [x_i,x_a,x_i]\equiv (x_i-1)(x_a-1), \\
  \partial_i [x_a,x_i,x_i]\equiv -(x_i-1)(x_a-1), \\
  \partial_i [x_i,x_a,x_a]\equiv (x_a-1)^2, \\
  \partial_i [x_a,x_i,x_a]\equiv -(x_a-1)^2. \\
\end{array}
\eqno{(2)}
$$
Here different letters denote different indices and the symbol $\equiv$ means equality  modulo the ideal $R$.
For example,

\begin{eqnarray}
\partial_i [x_i,x_a,x_b] & = & \partial_i \left( [x_a,x_i]x_b^{-1} [x_i,x_a]x_b\right) \nonumber\\
& = &  \partial_i [x_a,x_i]+[x_a,x_i]x_b^{-1}\partial_i   [x_i,x_a] \nonumber\\
& = & \partial_i \left(x_a^{-1}x_i^{-1}x_ax_i\right)+   [x_a,x_i]x_b^{-1}\partial_i  \left(x_i^{-1}x_a^{-1}x_ix_a\right) \nonumber\\
& = & -x_a^{-1}x_i^{-1}+ x_a^{-1}x_i^{-1}x_a+    [x_a,x_i]x_b^{-1} \left(-x_i^{-1}+ x_i^{-1}x_a^{-1}\right) \nonumber\\
& = & x_a^{-1}x_i^{-1}\left( x_a-1 \right)+    \left([x_a,x_i]-1\right)x_b^{-1}x_i^{-1}x_a^{-1} \left( 1-x_a \right)+       x_b^{-1}x_i^{-1}x_a^{-1} \left( 1-x_a \right) \nonumber\\
& \equiv &  x_a^{-1}x_i^{-1}\left( x_a-1 \right)+         x_b^{-1}x_i^{-1}x_a^{-1} \left( 1-x_a \right)  \nonumber\\
& = & x_a^{-1}x_i^{-1}\left( x_a-1 \right)+x_b^{-1}(1-x_b)x_i^{-1}x_a^{-1} \left( 1-x_a \right)         x_i^{-1}x_a^{-1} \left( 1-x_a \right)  \nonumber\\
&  \equiv  & (x_a-1)(x_b-1)+(x_a^{-1}x_i^{-1}-x_i^{-1}x_a^{-1})(x_a-1)\equiv (x_a-1)(x_b-1).\nonumber
\end{eqnarray}

As a consequence of the above observations, we obtain the following example which establishes the existence of wild automorphisms of free nilpotent groups.
\begin{example}
For example, consider the case of $\N_{2, 3}$. Take the automorphism
$$\phi : \left\{
\begin{array}{ll}
x_1 \longmapsto \overline{[x_2, x_1]} x_1 [x_2, x_1] &  \\
x_2 \longmapsto x_2. &
\end{array} \right.
$$
We claim that this automorphism is wild. Note that $\overline{[x_2, x_1]} x_1[x_2, x_1]= x_1  [x_1, x_2, x_1]$.  But
$$\partial_1[x_1, x_2, x_1]=(x_1-1) (x_2-1) \neq 0, \ \partial_2 1=0,$$
this contradicts $(1)$ above. Therefore $\phi$ must be wild. Thus, the elements $\mu_{ij}$, where $1 \leq i \neq j \leq n$ do not provide a complete set of generators for $E \Pi A(\N_{2, 3})$. \end{example}

The following remark will be useful.

\begin{remark} \label{r2}
For each index  $i=1, \ldots, n$ and each pair of  words $u,v$ on the letters  $x_1,\ldots,x_{i-1}$, $ x_{i+1}$, $\ldots, x_n$ in the free group  $F_n$, the map

$$
\phi: \left\{
\begin{array}{l}
x_i \longmapsto ux_i v\\
x_j \longmapsto x_j~\textrm{for}~ j \neq i
\end{array} \right.
$$
is an automorphism of $F_n$.
\end{remark}

By \propref{cgen}, the group of central palindromic automorphisms of  $\N_{n,3}$ is generated by automorphisms  $\varphi_{ab,i}$ and $\varphi_{abc,i}$, where $1\leq b < a \leq n$, $1 \leq b \leq c \leq n$ and $1 \leq i \leq n$. Recall that, these act on the generators  $x_1, \ldots, x_n$ in the following manner
$$
\varphi_{ab,i} : \left\{
\begin{array}{l}
x_i \longmapsto x_i [x_a, x_b,x_i][x_a, x_b,x_b] [x_a,x_b,x_a] \\
x_j \longmapsto x_j  ~\mbox{for}~j \not= i
\end{array} \right.
$$
and
$$
\varphi_{abc,i} : \left\{
\begin{array}{l}
x_i \longmapsto x_i [x_a, x_b,x_c]^2  \\
x_j \longmapsto x_j  ~\mbox{for}~j \not= i.
\end{array} \right.
$$
Further, we will assume  that the  indices  $a,b,c,i$ and so on are arbitrary and not necessarily satisfy the above inequalities. Under this assumption, the corresponding automorphisms are central palindromic, but not necessary independent.

Next, we decide which of these automorphisms are tame.

\begin{lemma} \label{l5}
An automorphism  $\varphi_{ab,i}$ is tame if and only if the  indices  $a,b,i$ are all different.
\end{lemma}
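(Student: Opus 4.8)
The plan is to verify the Bryant–Gupta–Levin–Mochizuki criterion of \thmref{t3.1} (in the reduced form $(1)$) for $\varphi_{ab,i}$ in both directions, using the Fox-derivative computations $(2)$. The automorphism $\varphi_{ab,i}$ is determined by $w_i = [x_a,x_b,x_i][x_a,x_b,x_b][x_a,x_b,x_a]$ and $w_j = 1$ for $j\neq i$, so the relevant sum in $(1)$ collapses to a single term $\partial_i w_i$, and by $\mathbb{Z}$-linearity of $\partial_i$ this equals $\partial_i[x_a,x_b,x_i] + \partial_i[x_a,x_b,x_b] + \partial_i[x_a,x_b,x_a]$ modulo $R$.

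First I would treat the case where $a,b,i$ are all distinct. Then $i\notin\{a,b\}$, so by the first line of $(2)$ each of $\partial_i[x_a,x_b,x_i]$, $\partial_i[x_a,x_b,x_b]$, $\partial_i[x_a,x_b,x_a]$ is $\equiv 0 \pmod R$ (each commutator involves only $x_a,x_b$ in positions that make the $\partial_i$ vanish; note $[x_a,x_b,x_i]$ has the form $\partial_i[x_a,x_b,x_*]$ with $*\neq$ the index being differentiated only through the last slot, and the computation $\partial_i[x_a,x_b,x_i]\equiv 0$ is exactly the second line of $(2)$ after relabelling). Hence $(1)$ holds, so the necessary condition is satisfied; to conclude tameness I would exhibit an explicit lift to $\mathrm{Aut}(F_n)$. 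Since $a,b,i$ are distinct, the word $p := [x_a,x_b,x_b][x_a,x_b,x_a]$ involves only the letters $x_a,x_b$, so by \rmkref{r2} the map sending $x_i\mapsto [x_a,x_b] x_i [x_a,x_b]^{-1} p$ (or an equivalent palindromic expression whose image in $\N_{n,3}$ is $w_i$; one checks $[x_a,x_b]x_i[x_a,x_b]^{-1} \equiv x_i[x_a,x_b,x_i] \bmod \gamma_4 F_n$) and fixing the other generators is an automorphism of $F_n$ inducing $\varphi_{ab,i}$.

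For the converse, suppose $a,b,i$ are not all different; since $a\neq b$ is part of the indexing, this forces $i=a$ or $i=b$. In either subcase I compute $\partial_i w_i$ modulo $R$ using $(2)$ and show it is nonzero in $\mathbb{Z}F_n/R$, which by \thmref{t3.1} rules out tameness. For instance if $i=a$: $\partial_a[x_a,x_b,x_a] \equiv (x_a-1)(x_b-1)$ — wait, more carefully, $[x_a,x_b,x_a]$ is $[x_i,x_b,x_i]$-type, giving $(x_i-1)(x_b-1)$; $[x_a,x_b,x_b]=[x_i,x_b,x_b]$-type giving $(x_b-1)^2$; $[x_a,x_b,x_a]$ with the differentiation index in first and third slots — and the sum turns out to be a nonzero element of the commutative ring $\mathbb{Z}F_n/R$ (whose relevant part is the symmetric algebra on $x_1-1,\dots,x_n-1$ truncated in degree $3$). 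The symmetric subcase $i=b$ is handled identically using the lines of $(2)$ with roles of first/third slots. The key point is that in $\mathbb{Z}F_n/R$ the monomials $(x_a-1)(x_b-1)$ and $(x_a-1)^2$, etc., are linearly independent, so no cancellation occurs.

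The main obstacle is bookkeeping in the converse direction: one must correctly read off from $(2)$ which template each of the three commutators $[x_a,x_b,x_i]$, $[x_a,x_b,x_b]$, $[x_a,x_b,x_a]$ matches once $i$ is specialized to $a$ or to $b$, keeping track of signs, and then confirm that the resulting sum is genuinely nonzero modulo $R$ rather than an accidental vanishing combination. This is purely a finite linear-algebra check in the degree-$2$ part of $\mathbb{Z}F_n/R$, so it is routine but must be done carefully; the forward (tameness) direction is the easier half since \rmkref{r2} hands us the lift almost for free once the indices are distinct.
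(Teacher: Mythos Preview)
Your approach is essentially the same as the paper's: reduce condition $(1)$ to $\partial_i[x_a,x_b,x_i]+\partial_i[x_a,x_b,x_b]+\partial_i[x_a,x_b,x_a]\pmod R$, use the table $(2)$ to see this vanishes when $a,b,i$ are distinct and is nonzero otherwise, and then exhibit a lift via \rmkref{r2} in the distinct-index case. The only substantive difference is that the paper actually carries out the bookkeeping you flag as the ``main obstacle'' --- for $i=a$ it obtains $2(x_i-1)(x_b-1)+(x_b-1)^2\not\equiv 0$ and for $i=b$ it obtains $-2(x_a-1)(x_i-1)-(x_a-1)^2\not\equiv 0$ --- and its explicit lift $x_i\mapsto [x_a,x_b^{-1}]^{-1}x_i\,[x_a,x_b^{-1}][x_a,x_b,x_bx_a]$ is a minor variant of yours; both rely on \rmkref{r2} in exactly the same way.
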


\begin{proof}
The tame condition (1) for the automorphism  $\varphi_{ab,i}$ has the form
$$
\partial_i [x_a,x_b,x_i]+
    \partial_i [x_a,x_b,x_a]+
       \partial_i [x_a,x_b,x_b] \equiv 0.
$$
If $a=i \neq b$, then using  (2), we get
$$
\partial_i [x_a,x_b,x_i]+
    \partial_i [x_a,x_b,x_a]+
       \partial_i [x_a,x_b,x_b] \equiv 2 (x_i-1)(x_b-1)+(x_b-1)^2 \not\equiv 0.
$$
If $a\neq b=i$,  then using  (2), we get
$$
\partial_i [x_a,x_b,x_i]+
    \partial_i [x_a,x_b,x_a]+
       \partial_i [x_a,x_b,x_b] \equiv -2 (x_a-1)(x_i-1)-(x_a-1)^2 \not\equiv 0.
$$
If $a\neq i\neq b$,  then using  (2), we get
$$
\partial_i [x_a,x_b,x_i]+
    \partial_i [x_a,x_b,x_a]+
       \partial_i [x_a,x_b,x_b] \equiv  0.
$$
By Remark, in this case $\varphi_{ab,i}$ is tame since

\begin{eqnarray}
x_i^{\varphi_{ab,i}} & = & x_i [x_a, x_b,x_i][x_a, x_b,x_b] [x_a,x_b,x_a] \nonumber\\
& = &  x_i [x_a, x_b^{-1},x_i^{-1}][x_a, x_b,x_bx_a]  \nonumber\\
& = & [x_a, x_b^{-1},x_i^{-1}]x_i [x_a, x_b,x_bx_a] \nonumber\\
& = & [x_a, x_b^{-1}]^{-1}x_i [x_a, x_b^{-1}][x_a, x_b,x_bx_a]. \nonumber
\end{eqnarray}
This proves the lemma.
\end{proof}

\begin{lemma} \label{l6}
The automorphism  $\varphi_{abc,i}$  is tame if and only if  $a,b,c,i$  or $a,b,c=i$ are all different.
\end{lemma}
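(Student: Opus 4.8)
The plan is to follow the pattern of the proof of \lemref{l5}: first make the Bryant--Gupta--Levin--Mochizuki necessary condition $(1)$ explicit for $\varphi_{abc,i}$, then use the formulas $(2)$ to rule out every configuration of the indices except the two listed, and finally, for the two surviving configurations, write down an automorphism of $F_n$ that induces $\varphi_{abc,i}$. As in the discussion preceding the lemma, I treat $a,b,c$ as pairwise distinct, so that $[x_a,x_b,x_c]$ is a genuine weight-$3$ basic commutator.

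First I would reduce condition $(1)$. Since $\varphi_{abc,i}$ fixes every generator except $x_i$, and $x_i^{\varphi_{abc,i}}=x_i\,[x_a,x_b,x_c]^2$, condition $(1)$ reads $\partial_i\big([x_a,x_b,x_c]^2\big)\equiv 0\ (\mathrm{mod}\ R)$, with $R=[\Delta,\Delta]+\Delta^3$, where $[x_a,x_b,x_c]^2$ is read as a lift in $\gamma_3 F_n$ (the class mod $R$ is independent of the lift, since two lifts differ by an element of $\gamma_4 F_n$ whose Fox derivatives lie in $\Delta^3\subseteq R$). Using $\partial_i(u^2)=(1+u)\partial_i u$ together with $\partial_i[x_a,x_b,x_c]\in\Delta^2$ and $[x_a,x_b,x_c]-1\in\Delta^3$, one gets $\partial_i\big([x_a,x_b,x_c]^2\big)\equiv 2\,\partial_i[x_a,x_b,x_c]\ (\mathrm{mod}\ R)$, so $(1)$ for $\varphi_{abc,i}$ is equivalent to $2\,\partial_i[x_a,x_b,x_c]\equiv 0\ (\mathrm{mod}\ R)$. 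For the ``only if'' direction I then read $\partial_i[x_a,x_b,x_c]$ off the table $(2)$ according to the position of $i$. If $i\notin\{a,b\}$ (equivalently $i\notin\{a,b,c\}$ or $i=c$), then $\partial_i[x_a,x_b,x_c]\equiv 0$ and the condition holds. If $i=a$ or $i=b$, the table gives $\partial_i[x_a,x_b,x_c]\equiv\pm(x_p-1)(x_q-1)$ for the two remaining distinct indices $p,q$; since the image of $\Delta^2$ in $\mathbb{Z}F_n/R$ is free abelian on the classes $(x_p-1)(x_q-1)$ with $p\le q$ (the ``symmetric part'' of $\Delta^2/\Delta^3$), we have $2(x_p-1)(x_q-1)\not\equiv 0\ (\mathrm{mod}\ R)$, so $(1)$ fails and $\varphi_{abc,i}$ is not tame. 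Hence tameness forces $i\notin\{a,b\}$, i.e. either $a,b,c,i$ are all different, or $a,b,c$ are all different and $i=c$.

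For the ``if'' direction I would produce genuine automorphisms of $F_n$ inducing $\varphi_{abc,i}$. When $a,b,c,i$ are all different, $[x_a,x_b,x_c]^2$ is a word on $x_a,x_b,x_c$ not involving $x_i$, so by \rmkref{r2} the assignment $x_i\mapsto x_i\,[x_a,x_b,x_c]^2$, $x_j\mapsto x_j$ for $j\neq i$, is an automorphism of $F_n$, and it visibly induces $\varphi_{abc,i}$ on $\N_{n,3}$. When $i=c$ (so $a,b\neq c$), put $w=[x_a,x_b]^2\in F_n$, a word on $x_a,x_b$, and let $\Phi\in Aut(F_n)$ be the automorphism (legitimate by \rmkref{r2}) defined by $x_c\mapsto w\,x_c\,w^{-1}$ and $x_j\mapsto x_j$ for $j\neq c$. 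A short computation in $\N_{n,3}$, using that $[x_a,x_b,x_c]$ is central of weight $3$ and that $[x_c,[x_a,x_b]^{-2}]=[x_c,[x_a,x_b]]^{-2}=[x_a,x_b,x_c]^{2}$, shows $w\,x_c\,w^{-1}=x_c\,[x_c,w^{-1}]\equiv x_c\,[x_a,x_b,x_c]^2$ modulo $\gamma_4 F_n$, so $\Phi$ induces $\varphi_{abc,c}$. Thus $\varphi_{abc,i}$ is tame in both surviving cases, which completes the equivalence.

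I expect the only delicate point to be the ``only if'' step, where one must know the structure of $\mathbb{Z}F_n/R$ in degree $2$ well enough to be certain that $2(x_p-1)(x_q-1)$ is genuinely nonzero there (not merely not obviously zero), and where one should bear in mind that the wholly degenerate index configurations permitted by the phrase ``arbitrary indices'' are implicitly excluded by taking $a,b,c$ distinct. The single new ingredient beyond \lemref{l5} is the choice of conjugator $w=[x_a,x_b]^2$ in the case $i=c$; everything else is bookkeeping with the identities $(2)$ and \lemref{l:1}.
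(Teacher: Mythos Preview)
Your proof is correct and follows essentially the same approach as the paper's: reduce condition~(1) to $\partial_i[x_a,x_b,x_c]\equiv 0$, use the table~(2) to isolate the two admissible index configurations, and then exhibit a tame lift in each case via \rmkref{r2}. The only cosmetic difference is the choice of conjugator in the case $i=c$: you take $w=[x_a,x_b]^2$, whereas the paper writes $x_i[x_a,x_b,x_i]^2=[x_a^2,x_b^{-1}]^{-1}x_i[x_a^2,x_b^{-1}]$; both are words in $x_a,x_b$ and induce the same automorphism of $\N_{n,3}$.
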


\begin{proof}
The tame condition  (1) for $\varphi_{abc,i}$ has the form
$$
\partial_i [x_a,x_b,x_c]^2\equiv 0.
$$
In other words,
$$
\partial_i [x_a,x_b,x_c] \equiv 0.
$$
By (2), the automorphism  $\varphi_{abc,i}$ is  tame if $a,b,c,i$  or $a,b,c=i$ are all different  indices. For different $a,b,c,i$, the automorphism  $\varphi_{abc,i}$ is evidently tame. For different $a,b,c=i$, we have
$$
 x_i^{\varphi_{abi,i}}= x_i [x_a, x_b,x_i]^2=
 x_i [x_a^2, x_b^{-1},x_i^{-1}] =
 [x_a^2, x_b^{-1},x_i^{-1}] x_i=
    [x_a^2, x_b^{-1}]^{-1}x_i [x_a^2, x_b^{-1}]
$$
which is a tame automorphism.
\end{proof}
\medskip

Let $\varphi$ be a central palindromic automorphism of  $\N_{n,3}$ for which condition (1) holds. By  \lemref{l5} and \lemref{l6}, it is enough to assume that
$\varphi$ is a product of automorphisms
$$
\varphi_{ai,i},\,\,\, \varphi_{ib,i},\,\,\, \varphi_{aii,i},\,\,\,   \varphi_{ibi,i},\,\,\, \varphi_{ibc,i}.
$$
Since
$$
 \varphi_{ai,i}^{-1}=\varphi_{ia,i}~\textrm{and}~
  \varphi_{aii,i}^{-1}= \varphi_{iai,i},
$$
it is enough to assume that $\varphi$ is a product of automorphisms
$$
\varphi_{ai,i},\,\,\, \varphi_{aii,i},\,\,\,  \varphi_{ibc,i}.
$$
Further, note that
$$
 x_i^{\varphi_{ai,i}}= x_i^{\varphi_{aii,i}} [x_a, x_i,x_a].
$$
Hence, if we introduce the automorphism  $\psi_{ai,i}$ given by
$$
\psi_{ai,i} : \left\{
\begin{array}{l}
x_i \longmapsto x_i [x_a, x_i,x_a]\\
x_j \longmapsto x_j  ~\mbox{for}~j \not= i,
\end{array} \right.
$$
then it is enough to consider the product of automorphisms
$$
\psi_{ai,i},\,\,\, \varphi_{aii,i},\,\,\,  \varphi_{ibc,i}.
$$
Since
$$
\psi_{ai,i}^2=\varphi_{iaa,i}^{-1},
$$
it follows that
$$
\varphi = \prod\limits_{i=1}^{n} \varphi_i,
$$
where
$$
\varphi_i =
            \prod\limits_{a\neq i} \left( \psi_{ai,i}^{A(a,i)} \varphi_{aii,i}^{B(a,i)}  \right)
            \prod\limits_{b\neq c\neq i}  \varphi_{ibc,i}^{D(b,c,i)},
$$
and $A(a,i)$, $B(a,i)$, $D(b,c,i) \in \mathbb{Z}$.

Since
$$
x_i^{\varphi}=x_i^{\varphi_i}=
         x_i
            \prod\limits_{a\neq i} \left( [x_a,x_i,x_a]^{A(a,i)} [x_a,x_i,x_i]^{B(a,i)}  \right)
            \prod\limits_{b\neq c\neq i}  [x_i,x_b,x_c]^{D(b,c,i)},
$$
condition  (1) has the form
$$
 \sum\limits_{i=1}^{n}
    \sum\limits_{a\neq i} \left( A(a,i)(x_a-1)^2+ B(a,i)(x_a-1)(x_i-1)  \right)
$$
$$
 \equiv \sum\limits_{i=1}^{n}
    \sum\limits_{b\neq c\neq i} D(b,c,i) (x_b-1)(x_c-1).
$$
This is further equivalent to the system

$$
 \sum\limits_{i=1}^{n} \sum\limits_{a\neq i} A(a,i)(x_a-1)^2\equiv 0
 \eqno{(3)}
$$
and
$$
 \sum\limits_{i=1}^{n}
    \sum\limits_{a\neq i} B(a,i)(x_a-1)(x_i-1)  \equiv
  \sum\limits_{i=1}^{n}
    \sum\limits_{b\neq c\neq i} D(b,c,i) (x_b-1)(x_c-1).
     \eqno{(4)}
$$

Recall that $S_n$ is a subgroup of $\Pi A(\N_{n,k})$ which acts on the generators  $x_1, \ldots, x_n$ in the following manner
$$
x_i^{\sigma}=x_{\sigma(i)},~\textrm{where}~1 \leq i \leq n~\textrm{and}~ \sigma\in S_n.
$$

The following lemma holds.

\begin{lemma} \label{l7}
The subgroup of  $\N_{n,3}$ whose elements satisfy the relation
$$
 \sum\limits_{i=1}^{n} \sum\limits_{a\neq i} A(a,i)(x_a-1)^2\equiv 0,
 \eqno{(3)}
$$
is generated by the automorphisms
$$
(\psi_{12,2}\psi_{13,3}^{-1})^{\sigma},~\textrm{where}~\sigma\in S_n.
$$
In particular, if $n=2$, then this subgroup is trivial.
\end{lemma}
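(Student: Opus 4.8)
The plan is to reduce the statement to elementary linear algebra over $\mathbb{Z}$. Write $\psi_{ai,i}$ for the automorphism $x_i\mapsto x_i[x_a,x_i,x_a]$, $x_j\mapsto x_j$ $(j\neq i)$, as in the text. Each $\psi_{ai,i}$ alters only the generator $x_i$, and only by a central element, so these automorphisms commute pairwise; moreover the central elements $[x_a,x_i,x_a]$, for distinct ordered pairs $(a,i)$, are up to sign pairwise distinct basic commutators of weight $3$, so $\langle\psi_{ai,i}\mid 1\le a\neq i\le n\rangle$ is free abelian of rank $n(n-1)$. Hence an automorphism of the form $x_i\mapsto x_i\prod_{a\neq i}[x_a,x_i,x_a]^{A(a,i)}$ is faithfully encoded by the integer array $\big(A(a,i)\big)$, and the subgroup named in the lemma, which I will call $G$, is precisely the set of such automorphisms whose array satisfies relation $(3)$.

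First I would rewrite $(3)$ by collecting the coefficient of each square:
\[
\sum_{i=1}^{n}\sum_{a\neq i}A(a,i)(x_a-1)^2=\sum_{a=1}^{n}\Big(\sum_{i\neq a}A(a,i)\Big)(x_a-1)^2\equiv 0\pmod{R}.
\]
The input needed here is that the images of $(x_1-1)^2,\dots,(x_n-1)^2$ in $\mathbb{Z}F_n/R$ are $\mathbb{Z}$-linearly independent. This follows from the structure of $\Delta^2/R$: since $\Delta^2/\Delta^3$ is free abelian on the products $(x_j-1)(x_l-1)$ (cf. \cite{Gup,Pas}), and modulo $\Delta^3$ the ideal $[\Delta,\Delta]$ is spanned by the antisymmetrizers $(x_j-1)(x_l-1)-(x_l-1)(x_j-1)$, the quotient $\Delta^2/R$ is free abelian on the monomials $(x_j-1)(x_l-1)$ with $j\le l$, and in particular the $n$ squares are part of a basis. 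Therefore $(3)$ is equivalent to the system $\sum_{i\neq a}A(a,i)=0$ for each $a=1,\dots,n$, which I will call $(\star)$.

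Next, since the conditions in $(\star)$ for different values of $a$ involve disjoint blocks of variables, $G$ decomposes as a direct sum $G\cong\bigoplus_{a=1}^{n}L_a$, where $L_a=\big\{(A(a,i))_{i\neq a}\mid\sum_{i\neq a}A(a,i)=0\big\}$ is the $A_{n-2}$ root lattice on the index set $\{1,\dots,n\}\setminus\{a\}$. It is standard that this lattice is generated by the differences $e_i-e_r$ with $i,r\neq a$ and $i\neq r$; under the encoding above, $e_i-e_r$ in $L_a$ corresponds to the automorphism $\psi_{ai,i}\psi_{ar,r}^{-1}$. Hence $G$ is generated by all $\psi_{ai,i}\psi_{ar,r}^{-1}$ with $a,i,r$ pairwise distinct, and each of these indeed lies in $G$ (its array satisfies $(\star)$).

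Finally I would match these generators with the conjugates in the statement. From $x_j^{\sigma}=x_{\sigma(j)}$ and $\theta^{\sigma}=\sigma^{-1}\theta\sigma$ one checks directly that $\psi_{ab,b}^{\sigma}=\psi_{\sigma(a)\sigma(b),\sigma(b)}$, whence
\[
(\psi_{12,2}\psi_{13,3}^{-1})^{\sigma}=\psi_{\sigma(1)\sigma(2),\sigma(2)}\,\psi_{\sigma(1)\sigma(3),\sigma(3)}^{-1}.
\]
Choosing $\sigma\in S_n$ with $\sigma(1)=a$, $\sigma(2)=i$, $\sigma(3)=r$ yields exactly $\psi_{ai,i}\psi_{ar,r}^{-1}$ for an arbitrary pairwise-distinct triple, and conversely every such conjugate has this form; thus $G=\langle(\psi_{12,2}\psi_{13,3}^{-1})^{\sigma}\mid\sigma\in S_n\rangle$. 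When $n=2$ there are no pairwise-distinct triples $a,i,r$ (equivalently each $L_a\cong\mathbb{Z}^{\,n-2}=0$), so $G$ is trivial. The basic-commutator bookkeeping, the $S_n$-conjugation computation, and the generation of the $A_{n-2}$ lattice by the differences $e_i-e_r$ are all routine; the one step that requires genuine care is the linear independence of $(x_1-1)^2,\dots,(x_n-1)^2$ modulo $R$, i.e.\ correctly extracting the structure of $\Delta^2/R$ from the properties of the quotient ring already recorded before the lemma. That is the main obstacle.
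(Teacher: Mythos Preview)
Your proof is correct and follows essentially the same route as the paper's: both reduce relation~$(3)$ to the system $\sum_{i\neq a}A(a,i)=0$ for each $a$, decompose along the index $a$, generate each piece by the obvious differences, and then use the $S_n$-action to obtain every $\psi_{ai,i}\psi_{ar,r}^{-1}$ as a conjugate of $\psi_{12,2}\psi_{13,3}^{-1}$. The only cosmetic difference is that the paper uses the consecutive differences $(1,-1,0,\dots), (0,1,-1,0,\dots),\dots$ and a cyclic permutation to pass between them, while you invoke the $A_{n-2}$ root lattice and all differences $e_i-e_r$ at once; your treatment of the linear independence of the $(x_a-1)^2$ in $\Delta^2/R$ is also more explicit than the paper's, which simply reads off the coefficients.
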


\begin{proof}
First, we consider the case $n=2$. The relation (3) has the form
$$
 A(1,2)(x_1-1)^2+A(2,1)(x_2-1)^2\equiv 0.
$$
Therefore  $A(1,2)=A(2,1)=0$.

Let  $n\geq 3$. Put  $A(i,i)=0$ for $1 \leq i \leq  n$ and rewrite relation in equivalent form of the system of linear equations
$$
 \sum\limits_{i=1}^{n}  A(a,i)= 0,~\textrm{where}~1 \leq a \leq n.
$$
Corresponding automorphisms
$$
 \psi_{a}= \prod\limits_{i=1}^{n}  \psi_{ai,i}^{A(a,i)},~\textrm{where}~1 \leq a \leq n
$$
act on $x_1, \ldots, x_n$ by the formula
$$
x_i^{\psi_{a}}=x_i [x_a,x_i,x_a]^{A(a,i)}~\textrm{for}~ 1 \leq i\leq n.
$$
A permutation  $\sigma\in S_n$ acts on $\psi_{a}$ by the rule
$$
\psi_{a}^{\sigma}=\psi_{\sigma(a)},~\textrm{where}~ 1 \leq a \leq n.
$$
Thus it is enough to consider only the automorphism
$$
 \psi_{1}= \prod\limits_{i=1}^{n}  \psi_{1i,i}^{A(1,i)},~\textrm{where}~ \sum\limits_{i=1}^{n}  A(1,i)= 0.
$$
A vector
$$
 \big(A(1,2), \ldots , A(1,n)\big)\in \mathbb{Z}^{n-1}
$$
such that  $ \sum\limits_{i=1}^{n}  A(1,i)= 0$ is a linear combination of vectors
$$
 (1,-1,0,0, \ldots, 0,0),\,\,\,(0,1,-1,0, \ldots , 0,0), \ldots, (0,0,0,0, \ldots , 1,-1).
$$
Hence, in the generating set, it is enough to include automorphisms
$$
\psi_{1i,i}\psi_{1,i+1,i+1}^{-1},~\textrm{where}~ i=2, \ldots, n-1.
$$
The cycle   $\sigma=(23 \ldots n)$ acts on them as follows
$$
(\psi_{1i,i}\psi_{1,i+1,i+1}^{-1})^{\sigma}=\psi_{1,i+1,i+1}\psi_{1,i+2,i+2}^{-1},~\textrm{where}~ i=2, \ldots, n-2.
$$
Hence, modulo the action of $S_n$, we can take only one automorphism  $\psi_{12,2}\psi_{13,3}^{-1}$.
\end{proof}

\begin{lemma} \label{l8}
A subgroup of the automorphism group of $\N_{n,3}$ that satisfy the relation
$$
 \sum\limits_{i=1}^{n}
    \sum\limits_{a\neq i} B(a,i)(x_a-1)(x_i-1)  \equiv
  \sum\limits_{i=1}^{n}
    \sum\limits_{b\neq c\neq i} D(b,c,i) (x_b-1)(x_c-1)
     \eqno{(4)}
$$
for $n\geq3$ is generated by automorphisms
$$
 (\varphi_{231,1})^{\sigma},\,\,\,
  (\varphi_{123,1}\varphi_{322,2})^{\sigma},~\textrm{where}~\sigma\in S_n.
$$
For  $n=2$, this subgroup is generated by a single automorphism $\varphi$ with
$$
x_1^{\varphi}=x_1[x_2,x_1,x_1]^2~\textrm{and}~ x_2^{\varphi}=x_1[x_2,x_1,x_2]^2,
$$
which is inner.
\end{lemma}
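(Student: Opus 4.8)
The plan is to recast the statement as the computation of the kernel of an explicit linear map, to exploit a decoupling of that kernel into $\binom{n}{2}$ independent pieces, each governed by a single integral linear equation, and then to match the resulting solution lattice against the $S_n$-orbits of the two displayed automorphisms.

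Let $\mathcal{A}$ be the (free abelian) group of automorphisms $\varphi=\prod_{i=1}^{n}\varphi_i$ with all the parameters $A(a,i)$ equal to zero, so that $x_i^{\varphi}=x_i\prod_{a\neq i}[x_a,x_i,x_i]^{B(a,i)}\prod_{b\neq c\neq i}[x_i,x_b,x_c]^{D(b,c,i)}$; imposing relation $(4)$ on $\mathcal{A}$ cuts out precisely the subgroup in the statement. Let $V$ be the image of $\Delta^{2}$ in $\mathbb{Z}F_n/R$; as follows from the discussion preceding the lemma, $V$ is free abelian on the classes of $(x_p-1)(x_q-1)$, $1\le p\le q\le n$. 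Let $\Phi\colon\mathcal{A}\to V$ carry $\varphi$ to the difference of the two sides of $(4)$, computed with the Fox-derivative rules $(2)$. Then the subgroup to be described is exactly $\ker\Phi$, so it suffices to find a generating set for $\ker\Phi$.

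The decisive observation, immediate from $(2)$, is that $\Phi$ actually lands in the span $V_0$ of the off-diagonal classes $(x_p-1)(x_q-1)$, $p<q$, and that the component of $\Phi$ indexed by a pair $\{p,q\}$ involves only the parameters $B(p,q),B(q,p)$ and $D(p,q,i),D(q,p,i)$ for $i\notin\{p,q\}$, none of which occurs in any other component. Hence $\mathcal{A}=\bigoplus_{p<q}\mathcal{A}_{\{p,q\}}$ and $\ker\Phi=\bigoplus_{p<q}\ker\Phi_{\{p,q\}}$, where each $\Phi_{\{p,q\}}$ is, up to sign and a harmless overall factor $2$, the single linear functional $B(p,q)+B(q,p)-\sum_{i\neq p,q}\bigl(D(p,q,i)+D(q,p,i)\bigr)$ on the rank-$(2n-2)$ lattice $\mathcal{A}_{\{p,q\}}$. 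Thus $\ker\Phi_{\{p,q\}}$ is free of rank $2n-3$, spanned by the evident difference vectors: the $B$--$B$ antisymmetrizer $\bigl(B(p,q)=1,\,B(q,p)=-1\bigr)$, realized by $\varphi_{pqq,q}\varphi_{qpp,p}^{-1}$; and, for each $i\neq p,q$, two transfer vectors pairing one of $D(p,q,i),D(q,p,i)$ against $B(p,q)$, each realized by a product of one $\varphi_{ibc,i}$ with one $\varphi_{aii,i}$.

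It remains to match these with the claimed generators. The rules $(2)$ give $\partial_1\!\bigl([x_2,x_3,x_1]^2\bigr)\equiv 0$ and $\partial_1\!\bigl([x_1,x_2,x_3]^2\bigr)+\partial_2\!\bigl([x_3,x_2,x_2]^2\bigr)\equiv 0$, so $\varphi_{231,1}$ and $\varphi_{123,1}\varphi_{322,2}$, and hence by the $S_n$-equivariance of $\Phi$ their full $S_n$-orbits, lie in $\ker\Phi$. For the reverse inclusion, the Jacobi identity in $\gamma_3\N_{n,3}$ yields $[x_2,x_3,x_1]^2=[x_1,x_3,x_2]^2[x_1,x_2,x_3]^{-2}$, i.e.\ $\varphi_{231,1}=\varphi_{132,1}\varphi_{123,1}^{-1}$, which is the pure-$D$ antisymmetrizer $\bigl(D(3,2,1)=1,\,D(2,3,1)=-1\bigr)$ inside $\ker\Phi_{\{2,3\}}$; its $S_n$-orbit therefore supplies, for every pair $\{p,q\}$ and every $i\neq p,q$, the antisymmetrizer $\bigl(D(p,q,i)=1,\,D(q,p,i)=-1\bigr)$. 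Combining suitable $S_n$-images of $\varphi_{123,1}\varphi_{322,2}$ with these antisymmetrizers produces first the transfer vectors and then the $B$--$B$ antisymmetrizer, so the two orbits span each $\ker\Phi_{\{p,q\}}$ and hence all of $\ker\Phi$, settling $n\ge 3$. For $n=2$ there is no index $i\notin\{1,2\}$, so every $D$-parameter is absent and $\ker\Phi$ collapses to the rank-one lattice generated by the displayed $\varphi$; a direct computation with weight-two and weight-three commutators identifies $\varphi$ with conjugation by $[x_1,x_2]^2$, hence it is inner. I expect the only genuinely delicate point to be the index bookkeeping in this last paragraph --- tracking how the $B$- and $D$-antisymmetrizations transform under $S_n$ and carrying out the Jacobi rewriting correctly in $\gamma_3\N_{n,3}$ --- rather than anything conceptual; the decoupling $\ker\Phi=\bigoplus_{p<q}\ker\Phi_{\{p,q\}}$ is what makes the whole computation routine.
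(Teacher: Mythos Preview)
Your proposal is correct and follows essentially the same approach as the paper: both arguments observe that relation~(4) decouples into one scalar equation per unordered pair $\{p,q\}$ (the paper's system~(5)), solve the resulting rank-one constraint on each block, and then use the Jacobi identity together with the $S_n$-action to match the lattice generators against the orbits of $\varphi_{231,1}$ and $\varphi_{123,1}\varphi_{322,2}$. Your linear-algebraic packaging via $\Phi$ and $\ker\Phi=\bigoplus_{p<q}\ker\Phi_{\{p,q\}}$ is a cleaner way to say what the paper does by hand, but the content is the same.
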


\begin{proof}
First, we consider the case $n=2$. The relation (4) has the form
$$
 B(2,1)(x_2-1)(x_1-1)+ B(1,2)(x_1-1)(x_2-1) \equiv 0.
$$
Hence, the corresponding automorphism has the form
$$
\varphi=\varphi_{211,1}^{B(2,1)} \varphi_{122,2}^{B(1,2)},~\textrm{where}~B(2,1)+ B(1,2) = 0
$$
and is a power of the inner automorphism
$$
\varphi : \left\{
\begin{array}{l}
x_1 \longmapsto x_1[x_2,x_1,x_1]^2=x_1^{[x_1,x_2]^2}\\
x_2 \longmapsto x_1[x_2,x_1,x_2]^2=x_2^{[x_1,x_2]^2}.
\end{array} \right.
$$

Let  $n\geq 3$. The relation (4) is equivalent to the system of linear equations
$$
  B(a,i)+ B(i,a) =  \sum\limits_{k\neq a,i} \left( D(a,i,k)+D(i,a,k) \right),~\textrm{where}~  1 \leq a \neq i \leq n.
     \eqno{(5)}
$$
This system of equations has $(n^2-n)/2$ equations, one equation for each pair of different  indices  $a\neq i$. Let us fix a pair $a\neq i$. The automorphism which corresponds to relation  (5) has the form
$$
\varphi =
           \varphi_{aii,i}^{B(a,i)}  \varphi_{iaa,a}^{B(i,a)}
             \prod\limits_{k\neq a,i}  \varphi_{kai,k}^{D(a,i,k)}
               \prod\limits_{k\neq a,i}  \varphi_{kia,k}^{D(i,a,k)}
$$
and act on  $x_1, \ldots, x_n$ in the following manner
$$
\varphi : \left\{
\begin{array}{l}
x_k \longmapsto x_k [x_k,x_a,x_i]^{2D(a,i,k)}[x_k,x_i,x_a]^{2D(i,a,k)}~\textrm{for}~ k\neq a,i\\
x_a \longmapsto x_a [x_i,x_a,x_a]^{2B(i,a)}\\
x_i \longmapsto x_i [x_a,x_i,x_i]^{2B(a,i)}.
\end{array} \right.
$$
Using the Jacobi identity, we get
$$
[x_k,x_i,x_a][x_i,x_a,x_k][x_a,x_k,x_i]=1.
$$
We can rewrite the action of $\varphi$ in the form
$$
\varphi : \left\{
\begin{array}{l}
x_k \longmapsto x_k [x_k,x_a,x_i]^{2(D(a,i,k)+D(i,a,k))}[x_a,x_i,x_k]^{2D(i,a,k)}~\textrm{for}~ k\neq a,i\\
x_a \longmapsto x_a [x_i,x_a,x_a]^{2B(i,a)}\\
x_i \longmapsto x_i [x_a,x_i,x_i]^{2B(a,i)}.
\end{array} \right.
$$
The map
$$
\begin{array}{l}
x_k \longmapsto x_k [x_a,x_i,x_k]^{2D(i,a,k)}=x_k^{[x_i,x_a]^{2D(i,a,k)}}~\textrm{for}~ k\neq a,i\\
x_a \longmapsto x_a\\
x_i \longmapsto x_i
\end{array}
$$
is a tame automorphism (central and palindromic). It can be written as a product
$$
    \prod\limits_{k\neq a,i}  \varphi_{aik,k}^{D(i,a,k)}.
$$
It is not difficult to see that the automorphism  $\varphi_{aik,k}$ conjugates to the automorphism  $\varphi_{231,1}$ by some permutation in $S_n$.
Going modulo these automorphisms, we have
$$
\varphi : \left\{
\begin{array}{l}
x_k \longmapsto x_k [x_k,x_a,x_i]^{2D(a,i,k)}~\textrm{for}~ k\neq a,i\\
x_a \longmapsto x_a [x_i,x_a,x_a]^{2B(i,a)}\\
x_i \longmapsto x_i [x_a,x_i,x_i]^{2B(a,i)},
\end{array} \right.
$$
where $B(a,i)+ B(i,a) =  \sum\limits_{k\neq a,i} D(a,i,k)$.

Evidently,  the automorphism  $\varphi$ is a product of automorphisms of the following type
$$
\varphi : \left\{
\begin{array}{l}
x_k \longmapsto x_k [x_k,x_a,x_i]^{2D(a,i,k)}~\textrm{for}~ k\neq a,i\\
x_a \longmapsto x_a\\
x_i \longmapsto x_i [x_a,x_i,x_i]^{2B(a,i)}
\end{array} \right.
$$
with $B(a,i) =  \sum\limits_{k\neq a,i} D(a,i,k)$ and

$$
\varphi : \left\{
\begin{array}{l}
x_k \longmapsto x_k [x_k,x_a,x_i]^{2D(a,i,k)}~\textrm{for}~ k\neq a,i\\
x_a \longmapsto x_a [x_i,x_a,x_a]^{2B(i,a)}\\
x_i \longmapsto x_i,
\end{array} \right.
$$
where $B(i,a) =  \sum\limits_{k\neq a,i} D(a,i,k)$.

Note that automorphisms of this form are conjugate by some permutation from  $S_n$. For example, we can take  $\sigma=(12)$. Therefore it is enough to consider only one of them, for example, the second. Moreover, modulo the action of  $S_n$, we can assume that  $a=n-1$ and $i=n$.
Hence, we have
$$
\varphi : \left\{
\begin{array}{l}
x_k \longmapsto x_k [x_k,x_{n-1},x_n]^{2D(n-1,n,k)}~\textrm{for}~ k=1, \ldots, n-2\\
x_{n-1} \longmapsto x_{n-1} [x_n,x_{n-1},x_{n-1}]^{2B(n,n-1)}\\
x_n\longmapsto x_n,
\end{array} \right.
$$
where $B(n,n-1) =  \sum\limits_{k=1}^{n-2} D(n-1,n,k).$ This automorphism is a product of powers of automorphisms
$$
\varphi : \left\{
\begin{array}{l}
x_k \longmapsto x_k [x_k,x_{n-1},x_n]^{2}~\textrm{for}~ k\neq n, n-1\\
x_{n-1} \longmapsto x_{n-1} [x_n,x_{n-1},x_{n-1}]^{2}\\
x_n\longmapsto x_n.
\end{array} \right.
$$
Each of them conjugate by some permutation to the automorphism $\varphi_{123,1}\varphi_{322,2}$.
\end{proof}

For convenience, recall that
$$
\partial_1 w_1+ \cdots +\partial_n w_n \equiv 0 \left( \mathrm{mod} ~ R \right).
\eqno{(1)}
$$

In view of \lemref{l5} and \lemref{l6}, we can formulate the main result of this section.

\begin{theorem}\label{central-bgml-condition}
The subgroup of central palindromic automorphisms of $\N_{n,3}$ which satisfy (1) for $n\geq3$ is generated by the automorphisms
$$
  (\varphi_{23,1})^{\sigma},\,\,\,
   (\varphi_{234,1})^{\sigma},\,\,\,
    (\varphi_{231,1})^{\sigma},\,\,\,
     (\psi_{12,2}\psi_{13,3}^{-1})^{\sigma},\,\,\,
      (\varphi_{123,1}\varphi_{322,2})^{\sigma},
$$
where $\sigma\in S_n$ and the automorphisms $\varphi_{23,1}$, $\varphi_{234,1}$, $\varphi_{231,1}$ are tame. 

For  $n=2$, this subgroup is generated by the single  automorphism $\varphi$ with
$$
x_1^{\varphi}=x_1[x_2,x_1,x_1]^2~\textrm{and}~x_2^{\varphi}=x_1[x_2,x_1,x_2]^2,
$$
which is tame.
\end{theorem}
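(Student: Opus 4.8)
The plan is to assemble the statement from Proposition~\ref{cgen} together with Lemmas~\ref{l5}, \ref{l6}, \ref{l7} and \ref{l8}. First I would note that the central palindromic automorphisms of $\N_{n,3}$ satisfying (1) do form a subgroup: for central automorphisms $x_i\mapsto x_i w_i$ with $w_i\in\gamma_3\N_{n,3}$ the ``defect'' $\sum_i\partial_i w_i$ is additive modulo $R=[\Delta,\Delta]+\Delta^3$, since $\partial_i w_i'\in\Delta^2$ and $w_i-1\in\Delta^3\subseteq R$. So it makes sense to ask for a generating set. Now let $\varphi$ be such an automorphism. By Proposition~\ref{cgen} it is a product of the pieces $\varphi_{ab,i}$ and $\varphi_{abc,i}$ (with indices now arbitrary). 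Lemmas~\ref{l5} and \ref{l6} determine exactly which of these pieces individually satisfy (1): precisely those with all indices distinct, and those lemmas provide explicit factorizations proving that the $S_n$-orbit representatives $\varphi_{23,1}$, $\varphi_{234,1}$, $\varphi_{231,1}$ are tame. Factoring out the tame pieces, and using the relations $\varphi_{ai,i}^{-1}=\varphi_{ia,i}$, $\varphi_{aii,i}^{-1}=\varphi_{iai,i}$, the identity $x_i^{\varphi_{ai,i}}=x_i^{\varphi_{aii,i}}[x_a,x_i,x_a]$ (which defines $\psi_{ai,i}$), and $\psi_{ai,i}^2=\varphi_{iaa,i}^{-1}$, one is reduced to the case $\varphi=\prod_{i=1}^n\varphi_i$ with $\varphi_i=\prod_{a\neq i}\big(\psi_{ai,i}^{A(a,i)}\varphi_{aii,i}^{B(a,i)}\big)\prod_{b\neq c\neq i}\varphi_{ibc,i}^{D(b,c,i)}$ for integers $A(a,i),B(a,i),D(b,c,i)$.

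Next I would compute $x_i^{\varphi}$ in closed form, substitute it into (1), and apply the Fox-derivative table (2) to see that (1) is equivalent to the pair of independent conditions: (3) on the quadratic terms $(x_a-1)^2$ coming from the $\psi_{ai,i}$ and $\varphi_{aii,i}$, and (4) on the mixed terms $(x_b-1)(x_c-1)$. These are exactly the hypotheses of Lemmas~\ref{l7} and \ref{l8}. By Lemma~\ref{l7} the subgroup cut out by (3) is generated by the $S_n$-orbit of $\psi_{12,2}\psi_{13,3}^{-1}$; by Lemma~\ref{l8} the subgroup cut out by (4) is generated by the $S_n$-orbits of $\varphi_{231,1}$ and $\varphi_{123,1}\varphi_{322,2}$. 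Combining these with the three tame generators $(\varphi_{23,1})^\sigma$, $(\varphi_{234,1})^\sigma$, $(\varphi_{231,1})^\sigma$ produced by Lemmas~\ref{l5} and \ref{l6} yields the asserted generating set, the tameness of $\varphi_{23,1}$, $\varphi_{234,1}$, $\varphi_{231,1}$ being read off from the explicit expressions in those two lemmas.

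For $n=2$ the same argument runs, but the $n=2$ cases of Lemmas~\ref{l7} and \ref{l8} collapse it drastically: there are no tame pieces with all indices distinct (one cannot have three or four distinct indices in $\{1,2\}$, nor a distinct triple $a,b,i$), condition (3) forces $A(1,2)=A(2,1)=0$, and condition (4) leaves only the one-parameter family of powers of the inner automorphism $\varphi$ with $x_1\mapsto x_1[x_2,x_1,x_1]^2=x_1^{[x_1,x_2]^2}$ and $x_2\mapsto x_2[x_2,x_1,x_2]^2=x_2^{[x_1,x_2]^2}$; being inner it is tame, giving the last assertion.

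I expect the genuinely substantive work to live inside Lemmas~\ref{l7} and \ref{l8} rather than in the assembly above — in particular the $n\geq3$ reduction for (4), where a Jacobi-identity manipulation is used to absorb the auxiliary tame family $\prod_{k\neq a,i}\varphi_{aik,k}^{D(i,a,k)}$ and thereby cut the system of $\tfrac12(n^2-n)$ linear relations (5) down, modulo the $S_n$-action, to the single generator $\varphi_{123,1}\varphi_{322,2}$ (and analogously Lemma~\ref{l7} reduces its homogeneous system to $\psi_{12,2}\psi_{13,3}^{-1}$). Granting those lemmas, the proof of the theorem itself is bookkeeping: check that every $\varphi$ satisfying (1) decomposes as above, that each listed family indeed satisfies (1), and that the tame claims are precisely the explicit factorizations already exhibited.
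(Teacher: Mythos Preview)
Your proposal is correct and follows essentially the same route as the paper: the theorem is stated there simply as the synthesis of Proposition~\ref{cgen}, Lemmas~\ref{l5}--\ref{l8}, and the intervening reduction to $\varphi=\prod_i\varphi_i$ with $\varphi_i=\prod_{a\neq i}\psi_{ai,i}^{A(a,i)}\varphi_{aii,i}^{B(a,i)}\prod_{b\neq c\neq i}\varphi_{ibc,i}^{D(b,c,i)}$, after which condition~(1) splits into (3) and (4). One small slip: the $\varphi_{aii,i}$ pieces contribute the mixed terms $B(a,i)(x_a-1)(x_i-1)$ and hence feed into (4), not (3); only the $\psi_{ai,i}$ produce the pure squares $(x_a-1)^2$ in (3).
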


\section{Some Problems}
Next, we discuss some open problems regarding palindromic automorphisms of free nilpotent groups. We define a filtration of $E \Pi A (\N_{n, k})$ as follows
$$PI_l(\N_{n, k})=\bigg \{\phi \in E \Pi A(\N_{n, k}) \ | \ \phi: x_i \mapsto \overline{q_i} x_i q_i, where \ q_i \in \gamma_l \N_{n, k} ~for \ i=1, \ldots, n \bigg\}.$$
It follows from the proof of Theorem \ref{group} that $PI_l(\N_{n, k})$ is a group for each $l$. Note that $E \Pi A(\N_{n, k})=PI_1(\N_{n, k})$, $PI(\N_{n, k})=PI_2(\N_{n, k})$ and we have
$$E \Pi A(\N_{n, k})=PI_1(\N_{n, k}) \geq PI_2(\N_{n, k})\geq  \cdots \geq PI_k(\N_{n, k}) \geq 1.$$

Also, we can take the lower central series of $PI(\N_{n, k})$
$$PI(\N_{n, k})=\gamma_1 PI(\N_{n, k}) \geq \gamma_2 PI(\N_{n, k}) \geq \cdots $$
Note that $PI_2(\N_{n, k})=\gamma_1 PI(\N_{n, k})$. It would be interesting to see connections between the groups $\gamma_s PI(\N_{n, k})$ and $PI_l(\N_{n, k})$ for $s,l \geq 1$.

Obtaining generators and relations of the groups $PI_l(\N_{n, k})$ would provide better understanding of these groups. For $l=2$ and $n=3$, \propref{cgen} gives a generating set. For $l=3$ and $n=3$, the group $PI(\N_{n, 3})$ is generated by the automorphisms $\psi_{abc, i}$. We see from  \thmref{central-palin} that if $l$ is even, then $PI_l(\N_{n, l})$ is trivial. Also, it follows from the proof of \thmref{central-palin} that, if $l$ is odd, then $PI_l(\N_{n, l})$ is non-trivial and generated by the automorphisms of the form $x_i \mapsto x_i[y_1, \ldots, y_l]^2$, where $y_1, \ldots, y_l$ are arbitrary elements of $\N_{n, l}$.

In \cite{BGS}, generators of $E \Pi A(F_3)' \cap IA(F_3)$ were obtained. In view of \cite[Proposition 6.6]{BGS}, we conjecture the following.

\begin{conjecture}
The elements of the form $[\mu_{ik}, \mu_{ij}]^{\mu_{lm}}$ generate $PI(\N_{n, 3})$.
\end{conjecture}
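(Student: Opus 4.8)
The plan is to prove the asserted equality by establishing the two inclusions. Write $G=\langle [\mu_{ik},\mu_{ij}]^{\mu_{lm}}\rangle$ for the subgroup generated by the proposed elements, with the indices ranging over all admissible choices. First I would dispose of the inclusion $G\le PI(\N_{n,3})$. On the abelianization the maps $\mu_{ij}$ and $\mu_{ik}$ have matrices $I+2E_{ij}$ and $I+2E_{ik}$; since $E_{ij}E_{ik}=E_{ik}E_{ij}=0$ when $i\neq j,k$, these matrices commute in ${\rm GL}(n,\Z)$, so $[\mu_{ik},\mu_{ij}]$ lies in $IA(\N_{n,3})$. As $E\Pi A(\N_{n,3})$ is a group by \thmref{group} and $IA(\N_{n,3})$ is normal in $Aut(\N_{n,3})$, every conjugate $[\mu_{ik},\mu_{ij}]^{\mu_{lm}}$ lies in $E\Pi A(\N_{n,3})\cap IA(\N_{n,3})=PI(\N_{n,3})$. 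For the reverse inclusion it suffices, by \propref{cgen}, to express each generator $\varphi_{ab,i}$ and $\varphi_{abc,i}$ as a word in the elements of $G$.

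The key step is the computation of the bare commutator. A direct calculation, using $\overline{[x_j^{-1},x_k^{-1}]}=[x_k,x_j]$ together with the identity $[x_a,x_b][x_b^{-1},x_a^{-1}]=[x_a,x_b,x_b][x_a,x_b,x_a]$ from the proof of \lemref{l:3}, gives, for distinct $i,j,k$,
$$x_i^{[\mu_{ik},\mu_{ij}]}=x_i\,[x_k,x_j,x_i]\,[x_k,x_j,x_j]\,[x_k,x_j,x_k],\qquad x_s^{[\mu_{ik},\mu_{ij}]}=x_s\ (s\neq i),$$
which is exactly $\varphi_{kj,i}$. Thus $G$ contains every $\varphi_{ab,i}$ with $a,b,i$ pairwise distinct. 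To reach the squared commutators I would conjugate by some $\mu_{im}$ with $m\notin\{i,j,k\}$; since $\varphi_{kj,i}$ is central, the conjugation only alters the central value, inserting an extra factor $[x_k,x_j,x_m]^2$, so that
$$[\mu_{ik},\mu_{ij}]^{\mu_{im}}\,[\mu_{ik},\mu_{ij}]^{-1}=\varphi_{kjm,i}.$$
Hence $G$ also contains every $\varphi_{abc,i}$ with $a,b,c,i$ pairwise distinct.

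It remains to produce the degenerate generators: the $\varphi_{ab,i}$ with $i\in\{a,b\}$ and the $\varphi_{abc,i}$ in which two indices coincide. Here I would again conjugate the basic commutators, but now by an $\mu_{lm}$ whose indices overlap $\{a,b,c,i\}$, so that the conjugation feeds a repeated letter into one of the three commutator slots. Expanding these conjugations in the associated graded Lie ring and simplifying with the Jacobi identity $[x_k,x_i,x_a][x_i,x_a,x_k][x_a,x_k,x_i]=1$, exactly as in the proof of \lemref{l8}, produces weight-three commutators with repeated entries. Combining these with the relations recorded in Section 5, namely $x_i^{\varphi_{ai,i}}=x_i^{\varphi_{aii,i}}[x_a,x_i,x_a]$ and $\psi_{ai,i}^2=\varphi_{iaa,i}^{-1}$, should let me rewrite each remaining $\varphi_{ab,i}$ and $\varphi_{abc,i}$ as a product of elements of $G$.

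The hardest part is this last step. The difficulty is not the algebra of any single conjugation but the verification that the exponent vectors produced by \emph{all} admissible conjugations actually span the full lattice of central palindromic automorphisms cut out by the palindromicity (even-power) constraints; a priori the Jacobi relations could leave a residual obstruction, and it is precisely such an obstruction that would make the statement a genuine conjecture rather than a formal consequence of \propref{cgen}. I would organise this as a finite rank computation over $\Z$, indexed by the repetition pattern of the degenerate commutators. One caveat must also be settled: for $n=2$ there are no three distinct indices, so every $[\mu_{ik},\mu_{ij}]$ is trivial while $PI(\N_{2,3})\neq 1$; the statement is therefore to be read for $n\ge 3$, in keeping with the split between $n\ge 3$ and $n=2$ in \thmref{central-bgml-condition}.
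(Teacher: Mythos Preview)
The statement you are attempting is recorded in the paper as a \emph{conjecture}, not a theorem; the authors give no proof and list it among the open problems of Section~6. Consequently there is no ``paper's own proof'' to compare your attempt against, and any complete argument you produce would be new.

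Your preliminary steps are sound. The inclusion $G\le PI(\N_{n,3})$ follows exactly as you say, and your identification $[\mu_{ik},\mu_{ij}]=\varphi_{kj,i}$ for pairwise distinct $i,j,k$ is correct: one computes directly that $x_i^{[\mu_{ik},\mu_{ij}]}=[x_k,x_j]\,x_i\,\overline{[x_k,x_j]}$, and \lemref{l:3} then gives the claimed central value. Your conjugation trick for $\varphi_{kjm,i}$ with four distinct indices is likewise correct. It is also right, and important, that the conjecture must be read for $n\ge 3$ in view of the nontriviality of $PI(\N_{2,3})$.

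Where your proposal is not a proof is precisely where you flag it: the ``degenerate'' generators $\varphi_{ab,i}$ with $i\in\{a,b\}$ and $\varphi_{abc,i}$ with a repeated index. You describe a strategy (conjugate by $\mu_{lm}$ with overlapping indices and reduce via Jacobi and the relations $x_i^{\varphi_{ai,i}}=x_i^{\varphi_{aii,i}}[x_a,x_i,x_a]$, $\psi_{ai,i}^2=\varphi_{iaa,i}^{-1}$), but you do not carry it out, and you yourself note that a residual lattice obstruction could survive. That is exactly the content of the conjecture: whether the exponent vectors you can reach this way span the full lattice cut out by \propref{cgen}. Until that finite $\mathbb{Z}$-linear computation is actually performed and shown to have no cokernel, what you have is a promising programme, not a proof; this matches the paper's decision to state the assertion as an open conjecture.
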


We conclude with some more open problems.

\begin{problem}
Are the automorphisms $ \psi_{12,2}\psi_{13,3}^{-1}$ and $ \varphi_{123,1}\varphi_{322,2}$  tame?
\end{problem}

\begin{problem}
Describe the intersection of the group of normal automorphisms and the group of palindromic automorphisms of free nilpotent groups.
\end{problem}

\begin{problem}
Describe the intersection of the group of pointwise inner automorphisms (class preserving automorphisms) and  the group of palindromic automorphisms of free nilpotent groups.
\end{problem}
\medskip

\subsection*{Acknowledgements. }The authors gratefully acknowledge the support from the DST-RFBR  project DST/INT/RFBR/P-137 and RFBR-13-01-92697. Bardakov is partially supported by Laboratory of Quantum Topology of Chelyabinsk State University via RFBR grant 14.Z50.31.0020 and 14-01-00014. Gongopadhyay is partially supported by NBHM grant NBHM/R. P.7/2013/Fresh/992. Neshchadim is partially supported by RFBR grant 14-01-00014. Singh is also supported by DST INSPIRE Scheme IFA-11MA-01/2011 and DST Fast Track Scheme SR/FTP/MS-027/2010.

\end{document}